\newtheorem{thm}{Theorem}[section]
 \numberwithin{equation}{section} %% Comment out for sequentially-numbered
 \numberwithin{figure}{section} %% Comment out for sequentially-numbered
 \theoremstyle{plain}
 \theoremstyle{plain}    
 \theoremstyle{plain}    
 \newtheorem{prop}[thm]{Proposition} %%Delete [thm] to re-start numbering
 \newtheorem{defi}[thm]{Definition}
 \theoremstyle{plain}    
 \newtheorem{lem}[thm]{Lemma} %%Delete [thm] to re-start numbering
 \theoremstyle{remark}
 \theoremstyle{definition}
\newtheorem{mthm}{Theorem}
\newcommand{\N}{\mathbb{N}}
\newcommand{\R}{\mathbb{R}}
\newcommand{\C}{\mathbb{C}}
\newcommand{\D}{\mathbb{D}}
\newcommand{\Cc}{\mathcal{C}}
\newcommand{\f}{\varphi}
\newcommand{\p}{\psi}
\newcommand{\vep}{\varepsilon}
\newcommand{\Ec}{\mathcal{E}}
\newcommand{\EcX}{\mathcal{E}(X,\omega)}
\newcommand{\Fc}{\mathcal{F}}
\newcommand{\Uc}{\mathcal{U}}
\newcommand{\Hc}{\mathcal{H}}
\newcommand{\Sc}{\mathcal{S}}
\newcommand{\ind}{{\bf 1}}
\newcommand{\setdef}{\ \big\vert \ }
\newcommand{\Capa}{{\rm Cap}}
\newcommand{\Capo}{{\rm Cap}_{\omega}}
\newcommand{\CapBT}{{\rm Cap}_{\rm BT}}
\newcommand{\Capis}{{\rm Cap}_{\psi}}
\newcommand{\MA}{\mathrm{MA}\,}
\newcommand{\vol}{{\rm Vol}}
\newcommand{\tr}{{\rm tr}}
\newcommand{\psh}{{\rm PSH}}
\begin{document}
\title[MA equations on quasi-projective varieties]{Complex Monge-Amp\`ere equations on quasi-projective varieties}

\date{\today \\
 The authors are partially supported by the french ANR project MACK} 

\author{E. Di Nezza} 

\address{Institut Math\'ematiques de Toulouse \\
Universit{\'e} Paul Sabatier\\ 31062 Toulouse\\ France}

\email{eleonora.dinezza@math.univ-toulouse.fr}

\author{Chinh H. Lu}
\address{Chalmers University of Technology \\ Mathematical Sciences\\
412 96 Gothenburg\\ Sweden}

\email{chinh@chalmers.se}

\begin{abstract}
We introduce generalized Monge-Amp\`ere capacities and use these to study complex Monge-Amp\`ere equations 
whose right-hand side is smooth outside a divisor. 
We prove, in many cases, that there exists a unique normalized solution  which is smooth outside the divisor.
\end{abstract}

\maketitle

\section{Introduction}

Let $(X,\omega)$ be a compact K\"{a}hler manifold of complex 
dimension $n$ and let $D$ be a divisor on $X$. Let $f$ be a 
non-negative 
function  such that $\int_X f\omega^n=\int_X \omega^n$.
Consider the following  complex Monge-Amp\`ere equation 
\begin{equation}\label{eq: intro 1}
 (\omega+dd^c \varphi)^n=f\omega^n. 
\end{equation}

When $f$ is smooth and positive on $X$, it follows from the seminal work of Yau \cite{Y78}
that (\ref{eq: intro 1}) admits a unique normalized smooth solution $\f$  such that 
$\omega+dd^c \f$ is a K\"ahler form. Recall that this result solves in particular the Calabi conjecture
and allows to construct Ricci flat metrics on $X$ whenever $c_1(X)=0$.

It is very natural to look for a similar result when $f$ is merely smooth and positive on the complement
of $D$, e.g. when studying Calabi's conjecture on quasi-projective manifolds (see e.g. \cite{TY87,TY90,TY91} and \cite{H12}) for recent developments). The study of conical K\"ahler-Einstein metrics 
(K\"ahler-Einstein metrics in  the complement of a divisor 
with  a precise  behavior near $D$) has played a major role in the resolution of the Yau-Tian-Donaldson
conjecture for Fano manifolds (see \cite{Don12},\cite{DS12},\cite{CDS1,CDS2,CDS3},\cite{T12}).

However no systematic study of the regularity of solutions to such complex Monge-Amp\`ere equations 
has ever been done, this is the main goal of this article.
It follows from \cite{GZ07} that (\ref{eq: intro 1}) has 
 a unique (up to an additive constant) solution in  the finite energy class $\Ec(X,\omega)$. 
We say that the solution is normalized if  $\sup_X \f=0$. The problem thus boils down to showing that such a normalized
solution is smooth in $X \setminus D$ and understanding its asymptotic behavior along $D$.

As in the classical case of Yau \cite{Y78} the main difficulty is in establishing a priori 
$\Cc^0$ bounds. Since, in general the solution $\varphi$ is unbounded,
the idea is to bound $\varphi$ from below by some (singular) 
$\omega$-psh function. 

Our first main result shows that the solution $\f$ is smooth in $X\setminus D$ when $f$ satisfies the mild condition $\Hc_f$:
 \begin{eqnarray*}\label{eq: smooth condition}
\ \ \quad f= e^{\psi^+-\psi^-}, \ \psi^{\pm}\ {\rm are\ quasi \ plurisubharmonic\ on }\ X, \ 
 \ \psi^-\in L^{\infty}_{\rm loc}(X\setminus D).
\end{eqnarray*}

Let us stress that $D$ is here an arbitrary divisor.
\begin{mthm}\label{thm: main general}
{\it Assume that $0<f\in \Cc^{\infty}(X\setminus D)$ satisfies Condition $\Hc_f$. Then the solution $\varphi$ is also smooth on $X\setminus D$.}
\end{mthm}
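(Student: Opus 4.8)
The strategy is to establish $\Cc^\infty$ regularity locally, so I fix a relatively compact open set $U\Subset X\setminus D$ together with smaller sets $U''\Subset U'\Subset U$, and aim at $\varphi\in\Cc^\infty(U'')$. Since the existence of a solution $\varphi\in\Ec(X,\omega)$ is already given, the problem is purely one of interior regularity and splits into two parts: an a priori local $L^\infty$ bound for $\varphi$ on $U$, and the higher-order interior estimates that upgrade such a bound to full smoothness. The normalization $\sup_X\varphi=0$ provides the upper bound $\varphi\le 0$ on all of $X$ for free, so the whole difficulty is concentrated in bounding $\varphi$ from below on $U$.

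For this lower bound I would argue as follows. Writing $f=e^{\psi^+-\psi^-}$ as in Condition $\Hc_f$ and using that $\psi^+$ is bounded above, one has $f\le e^{\sup_X\psi^+}\,e^{-\psi^-}$, so the only source of blow-up of the right-hand side is the weight $e^{-\psi^-}$, whose singularities are confined to $D$ because $\psi^-\in L^\infty_{\rm loc}(X\setminus D)$. The plan is to produce an $\omega$-plurisubharmonic barrier $u\in\psh(X,\omega)\cap L^\infty_{\rm loc}(X\setminus D)$, singular only along $D$, whose Monge--Amp\`ere measure dominates $f\omega^n=(\omega+dd^c\varphi)^n$, and then to invoke the domination principle in $\Ec(X,\omega)$ to deduce $u\le\varphi$ on $X$; since $u$ is locally bounded on $X\setminus D$ and $\varphi\le0$, this yields $\varphi\in L^\infty_{\rm loc}(X\setminus D)$. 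The construction of $u$, and the control of the constant in the estimate, is exactly where the generalized Monge--Amp\`ere capacities of the paper enter: they furnish a Ko\l odziej-type $L^\infty$ estimate adapted to the singular weight $e^{-\psi^-}$, for which the classical $L^p$ hypothesis fails. I expect this to be the main obstacle, precisely because $e^{-\psi^-}$ need not be integrable, so the usual capacity comparisons must be replaced by their weighted analogues.

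Once $\varphi\in L^\infty(U)$ is known, the right-hand side is smooth and, on $U'$, bounded away from $0$ and $\infty$, so $\log f\in\Cc^\infty(U')$ with bounded derivatives, and from here the argument is classical and local. Using a cutoff supported in $U'$ together with the Aubin--Yau (Chern--Lu) second-order inequality, I would derive an interior Laplacian estimate $0<\omega+dd^c\varphi\le C\omega$ on $U''$; combined with $(\omega+dd^c\varphi)^n=f\omega^n$ and the lower bound on $f$, this forces $\omega+dd^c\varphi\ge c\,\omega>0$, hence uniform ellipticity on $U''$. The Evans--Krylov theorem then gives an interior $\Cc^{2,\alpha}$ bound, and differentiating the equation and applying the Schauder estimates in the standard bootstrap produces interior $\Cc^{k}$ bounds for every $k$, i.e. $\varphi\in\Cc^\infty(U'')$.

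To make these a priori estimates rigorous for the weak solution $\varphi\in\Ec(X,\omega)$, I would run them along a regularization. Taking decreasing sequences of smooth quasi-plurisubharmonic $\psi^\pm_j\searrow\psi^\pm$, set $f_j=c_j^{-1}e^{\psi^+_j-\psi^-_j}$ with $c_j\to1$ chosen so that $\int_X f_j\,\omega^n=\int_X\omega^n$; by Yau's theorem each equation $(\omega+dd^c\varphi_j)^n=f_j\omega^n$, $\sup_X\varphi_j=0$, has a smooth solution $\varphi_j$. The crucial point is that the weighted capacity estimate of the second paragraph is stable under this approximation, so the local $L^\infty$ bound on $U$, and hence all the higher-order estimates on $U''$, are uniform in $j$. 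Finally, the uniqueness and stability theory in $\Ec(X,\omega)$ identifies $\lim_j\varphi_j$ with $\varphi$, and the uniform interior $\Cc^k$ estimates pass to the limit, giving $\varphi\in\Cc^\infty(X\setminus D)$.
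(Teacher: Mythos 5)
Your overall skeleton (a priori $\Cc^0$ bound, then a Laplacian estimate, then Evans--Krylov/Schauder bounds along a regularization solved by Yau's theorem and identified with $\f$ by stability in $\Ec(X,\omega)$) coincides with the paper's, but both steps that carry the actual difficulty are defective. The first gap is your $\Cc^0$ strategy. You propose to construct a barrier $u\in\psh(X,\omega)\cap L^{\infty}_{\rm loc}(X\setminus D)$ with $(\omega+dd^c u)^n\geq f\omega^n$ and to conclude $u\leq\f$ by the domination principle. Such a barrier cannot exist except trivially: domination of measures forces $\int_X(\omega+dd^cu)^n\geq\int_X f\omega^n=\int_X\omega^n$, hence $u\in\Ec(X,\omega)$, and then the nonnegative measure $(\omega+dd^cu)^n-f\omega^n$ has zero total mass, so $\MA(u)=\MA(\f)$ and, by uniqueness in $\Ec(X,\omega)$, $u=\f+{\rm const}$ --- the argument is circular. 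A supersolution variant (domination only on the set $\{\f<u\}$) is exactly what the paper uses in Section \ref{sect: asymp}, but it needs explicit asymptotics of $f$ along an snc divisor; in Theorem \ref{thm: main general} the divisor $D$ is arbitrary and $\psi^-$ is an arbitrary quasi-psh weight, so no explicit barrier is available. The paper's actual argument (Theorem \ref{thm: general uniform}) uses no barrier: it proves $\f\geq a\psi^--A$ directly by a De Giorgi-type iteration on $t\mapsto\left[\Capis(\{\f<\p-t\})\right]^{1/n}$ with $\p=a\psi^-\in\psh(X,\omega/2)$, combining Proposition \ref{prop: EGZ09 SKE generalized}, Lemma \ref{lem: classical cap and cap psi}, the volume-capacity inequality of \cite{GZ05}, and Skoda integrability to control the constants. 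Appealing to ``a Ko{\l}odziej-type estimate adapted to the weight'' without this mechanism leaves the main step unproved.

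The second gap is the Laplacian estimate. You propose a purely interior estimate on $U''\Subset U'$ via a cutoff and the Chern--Lu inequality, using only the local $L^\infty$ bound and the smoothness and positivity of $f$ on $U'$. No such interior estimate exists for the complex Monge-Amp\`ere equation: Pogorelov/B{\l}ocki-type examples provide bounded psh functions solving $(dd^cu)^n=f\,dV$ with $f$ smooth and positive which are not $\Cc^2$ in the interior, so any argument relying only on local data must fail (concretely, the cutoff terms in the maximum principle are multiplied by the quantity one is trying to bound and cannot be absorbed). The paper's Theorem \ref{thm: C2 estimate} is instead a \emph{global} maximum principle on the compact manifold $X$ applied to $H=\log\tr_\omega(\omega_\f)+2\psi^--(1+4C)\f$: the inserted weight $2\psi^-$ absorbs the term $-\Delta_{\omega_\f}\psi^-$ coming from the right-hand side, and at the maximum point $x_0$ --- which may well lie near $D$ --- the global estimate $\f\geq\frac{1}{4C+1}\psi^--A$ (valid on all of $X$, not merely on $U$) converts $-(1+4C)\f(x_0)$ into $-\psi^-(x_0)+C'$, yielding $\Delta_\omega\f\leq Ae^{-2\psi^-}$ on $X$, which is precisely what is uniform on compact subsets of $X\setminus D$. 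Your localization discards the global structure that makes this work. The remaining parts of your plan (Demailly regularization, stability in $\Ec(X,\omega)$, bootstrap) do match the paper, except that the $\varepsilon$-independence of the constants requires the uniform Skoda bound $\sup_{\vep}\int_X e^{-A\f_{\vep}}\omega^n<+\infty$, which the paper obtains from $L^1$-compactness of $\{\f_\vep\}$ and Zeriahi's theorem and which you pass over in silence.
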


In Theorem \ref{thm: main general}, the density $f$ is only in 
$L^1(X)$ and there is no regularity assumption on $D$. Hence we do not have any information about the behavior 
of  $\f$ near $D$. If we assume more regularity on $f$ and $D$, we will
get more precise $\Cc^0$-bounds. 

Assume that $D= \sum_{j=1}^N D_j$
is a simple normal crossing divisor (snc for short). For each $j=1,...,N$, let  $L_j$ be the holomorphic line bundle defined by 
$D_j$. Let $s_j$ be  a holomorphic section of $L_j$ such that 
$D_j=\{s_j= 0\}$. 
Fix a hermitian metric $h_j$ on $L_j$ such  that $|s_j|:=\vert s_j\vert_{h_j} \leq 1/e$. 

We say that $f$ satisfies Condition $\Sc(B,\alpha)$ for some constants $B>0,\alpha>0$ if 
\begin{equation*}\label{eq: cond f log}
f\leq \frac{B}{\prod_{j=1}^N|s_j|^{2}
(-\log \vert s_j\vert)^{1+\alpha}}\ .
\end{equation*}

\begin{mthm}\label{thm: main log}
{\it Assume that $f\leq e^{-\phi}$ for some quasi-plurisubharmonic function $\phi$. Then for each $a>0$ there exists $A>0$ depending 
on $\int_X e^{-2\f/a}\omega^n$ such that
$$
\f\geq a \phi -A.
$$
More precisely, if $f$ satisfies Condition $\Sc(B,\alpha)$ for some  
$\alpha>0$, $B>0$, then the following holds:
\begin{itemize}
\item[(a)] if $\alpha>1$ then $\varphi$ is continuous on $X$, 
$\varphi\geq -C$, with $C=C(B,\alpha)$.
\item[(b)] if $\alpha=1$ then there exists $A_1, A_2 >0$ depending on $B$ such that 
$$
\varphi \geq \sum_{j=1}^N -A_1\left[\log (-\log \vert s_j\vert + A_2)\right],
$$
\item[(c)] if $\alpha\in (0,1)$ then for each 
$\beta\in (1-\alpha,1)$ and $a>0$ there exists $A>0$ depending on 
$a,\alpha,\beta,B$ such that 
$$
\varphi \geq \sum_{j=1}^N -a(-\log \vert s_j\vert)^{\beta} -A. 
$$ 
\end{itemize}
}
\end{mthm}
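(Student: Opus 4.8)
The plan is to establish the general lower bound first, and then, under the stronger hypothesis $\Sc(B,\alpha)$, to construct explicit barriers realizing the three asymptotics. Throughout I normalize so that $\sup_X\f=0$ and $\sup_X\phi=0$; since $\MA(\f)=f\omega^n$ and $f\le e^{-\phi}$, the solution itself carries all the information we need.

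For the general estimate I would control the Monge--Amp\`ere mass of the sublevel sets $U_t:=\{\f<a\phi-t\}$. On $U_t$ one has $\phi>(\f+t)/a$, hence $e^{-\phi}<e^{-t/a}e^{-\f/a}$, and therefore
\[
\int_{U_t}\MA(\f)=\int_{U_t}f\,\omega^n\le\int_{U_t}e^{-\phi}\,\omega^n\le e^{-t/a}\int_X e^{-\f/a}\,\omega^n\le C\,e^{-t/a},
\]
where $C\le\big(\int_X\omega^n\big)^{1/2}\big(\int_X e^{-2\f/a}\omega^n\big)^{1/2}$ by Cauchy--Schwarz; this is exactly the asserted dependence of the constant. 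Next I would feed this exponential decay into the comparison principle: the standard Bedford--Taylor/Kolodziej estimate gives $\delta^n\,\Capo(U_{t+\delta})\le\int_{U_t}\MA(\f)\le C e^{-t/a}$ for every $\delta\in(0,1)$, after the harmless rescaling of $\omega$ needed to make $a\phi$ admissible. Since the right-hand side decays exponentially, the De Giorgi--Kolodziej iteration lemma forces $\Capo(U_T)=0$ for some finite $T=T(C,a,n)$; as a set of vanishing capacity has Lebesgue measure zero and both $\f$ and $a\phi$ are quasi-psh, we conclude $\f\ge a\phi-T$ everywhere, with $A:=T$ depending on $\int_X e^{-2\f/a}\omega^n$.

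For the refined statements (a)--(c) the general estimate is of no help, because $f\le e^{-\phi}$ forces $\phi\le-\log f\sim 2\log|s_j|$ near $D_j$, which is far too singular to produce the sub-logarithmic bounds claimed. Instead I would build, case by case, an $\omega$-psh subsolution $\rho$ with $(\omega+dd^c\rho)^n\ge\frac{B\,\omega^n}{\prod_j|s_j|^2(-\log|s_j|)^{1+\alpha}}\ge\MA(\f)$, and then invoke the comparison/domination principle (valid since $\f\in\EcX$) to get $\f\ge\rho-C$. Taking $\rho=-\sum_j\chi(-\log|s_j|)$ with a radial profile, the leading term of the Monge--Amp\`ere density along $D_j$ is comparable to $\chi''\,|s_j|^{-2}$; matching this against $|s_j|^{-2}(-\log|s_j|)^{-(1+\alpha)}$ dictates $\chi''(\ell)\sim\ell^{-(1+\alpha)}$, whence $\chi$ is bounded for $\alpha>1$, $\chi\sim\log\ell$ for $\alpha=1$, and $\chi\sim\ell^{\,1-\alpha}$ for $\alpha\in(0,1)$. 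In the last case one takes the slightly larger exponent $\beta\in(1-\alpha,1)$ precisely so that the domination is strict and absorbs the lower-order and metric terms; this yields (c), and the analogous bounded and log-log profiles yield (a) and (b). In case (a) the bound $\f\ge-C$ is then upgraded to continuity on all of $X$ by checking that $f\omega^n$ satisfies Kolodziej's capacity criterion when $\alpha>1$.

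The main obstacle I anticipate is the barrier construction rather than the mass estimate: one must verify that $\rho=-\sum_j\chi(-\log|s_j|)$ is \emph{globally} $\omega$-psh on $X$, which is delicate along the crossings of the snc divisor $D$ (where several $|s_j|\to0$ simultaneously) and requires choosing the constants $a$, $A_1$, $A_2$ small or large enough to dominate the mixed terms and the curvature of the metrics $h_j$. The borderline case $\alpha=1$, where the two exponents match exactly, is the most sensitive, and is precisely where the extra shift $A_2$ inside the logarithm is needed.
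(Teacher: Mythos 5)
Your proposal contains two genuine gaps, one in each half.

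\textbf{The general estimate.} Your mass bound $\int_{U_t}\MA(\f)\le Ce^{-t/a}$ and the rescaled capacity inequality $\delta^n\,\Capa_{\omega/2}(U_{t+\delta})\le\int_{U_t}\MA(\f)$ are both correct, but the final step fails: the De Giorgi--Ko{\l}odziej lemma (\cite[Lemma 2.4]{EGZ09}) requires the \emph{self-improving} inequality $s\,H(t+s)\le C\,H(t)^{2}$, in which the right-hand side is quadratic in $H(t)$ itself; it does not apply to a bound whose right-hand side is merely an exponentially decaying function of $t$. Indeed $H(t)=e^{-t/a}$ satisfies every inequality you derive and never vanishes, so no finite $T$ with $\Capa_{\omega/2}(U_T)=0$ can be extracted: exponential decay of $\Capa_{\omega/2}(\{\f<a\phi-t\})$ is strictly weaker than $\f\ge a\phi-T$ (it only yields exponential integrability, in the spirit of Lemma \ref{lem: GZ exponential}). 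The missing ingredient is Ko{\l}odziej's feedback: do not bound $\int_{U_t}e^{-\f/a}\omega^n$ by the integral over all of $X$; instead apply H\"older to keep a factor $\vol_\omega(U_t)^{1/2}$, dominate the volume by $\exp\bigl(-C_1/\Capo(U_t)^{1/n}\bigr)$ using \cite{GZ05}, and thereby convert the estimate into $s^n\Capis(U_{t+s})\le C_3\bigl(\Capis(U_t)\bigr)^{2}$ with $\psi=a\phi$. This is exactly how Theorem \ref{thm: general uniform} runs, and it is why the generalized capacity $\Capis$ (equivalently your $\omega/2$-rescaling, which is fine) is introduced.

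\textbf{The barriers for (a)--(c).} The mechanism you propose --- build $\rho\in\psh(X,\omega)$ with $(\omega+dd^c\rho)^n\ge Bf_\alpha\omega^n\ge\MA(\f)$, where $f_\alpha$ denotes the model density, and conclude $\f\ge\rho-C$ by ``comparison/domination'' --- cannot work on a compact manifold. First, no such $\rho$ exists except in a degenerate case: since $f\le Bf_\alpha$ and $\int_X f\omega^n=\int_X\omega^n$, one has $\int_X Bf_\alpha\omega^n\ge\int_X\omega^n$, while the non-pluripolar mass of any $\omega$-psh function is at most $\int_X\omega^n$; so $\MA(\rho)\ge Bf_\alpha\omega^n$ forces equality of all these measures, i.e.\ $f=Bf_\alpha$ a.e.\ and, by uniqueness \cite{Di09}, $\rho=\f+{\rm const}$ --- the ``barrier'' is the solution itself. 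Second, even granting such a $\rho$, pointwise domination of Monge--Amp\`ere measures never implies $\f\ge\rho-C$; the comparison principle only controls integrals over sublevel sets. The paper's case (c) evades both obstructions by making the densities depend monotonically on the unknown potentials: from the crude bound $\f\ge\sum_j\log|s_j|-C_0$ (a consequence of the general estimate) one gets $\MA(\f)\le C_1f_{1-\beta}\omega^n/(-\f)^{\delta}$, while the model $u_\beta$ satisfies $\MA(u_\beta-C_2)\ge C_1f_{1-\beta}\omega^n/(-u_\beta+C_2)^{\delta}$ with $\delta=\alpha+\beta-1>0$; only with this dependence on $\f$ and $u_\beta$ does the comparison principle force $\{\f<u_\beta-C_2\}$ to be empty. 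For $\alpha=1$ the paper uses no barrier at all: it first proves $\int_X e^{-c\f}f\omega^n<\infty$ via the refined volume--capacity estimates (Lemmas \ref{lem: refined vol-cap domination alpha=1} and \ref{lem: GZ exponential}), then runs the generalized-capacity iteration with $\psi=-A_1\sum_j\log(-\log|s_j|+A_2)$, needing $A_1c>1$. Your profiles ($\chi$ bounded, $\log\ell$, $\ell^{1-\alpha}$) are the correct shapes, and your case (a) (Ko{\l}odziej's criterion, whose verification is the content of Lemma \ref{lem: vol cap} and requires the induction of \cite{ACK}) matches the paper, but the step turning the profiles into lower bounds is precisely where your argument fails.
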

\medskip

\noindent{\bf Remark.}{\it\  It follows from Skoda's theorem \cite{Sk72} that $\int_X e^{-2\f/a}\omega^n$ is finite for all $a>0$, since $\f\in \Ec(X,\omega)$ has zero Lelong number at all points \cite{GZ07}.}

\medskip

When the behavior of $f$ near the divisor $D$ looks exactly like
$$
\frac{1}{\prod_{j=1}^N|s_j|^2 |\log |s_j||^{1+\alpha}} , \ \alpha \in (0,1]
$$
we show in Proposition \ref{prop: asymptotic 1} and Proposition \ref{prop: asymptotic 2} that $\f(x)$ converges to $-\infty$ as $x$ approaches $D$ with precise rates. In particular there is no bounded 
solution to (\ref{eq: intro 1}).

When $f \in L^p(\omega^n)$ for some $p>1$, it follows from the work of Ko{\l}odziej \cite{Kol98}
that the solution of (\ref{eq: intro 1}) is actually uniformly bounded (and even H\"older continuous) on the whole of $X$. 

In our result, the density $f$ is merely in $L^1$. 
The first part of Theorem \ref{thm: main log} says that when 
$\alpha>1$ the solution is continuous on $X$. Ko{\l}odziej's result 
\cite[Theorem 2.5.2]{Kol98} also applies when $f$ satisfies 
$\Sc(B,\alpha)$ for $\alpha>n$ but can not be applied to a density $f$ as above if $\alpha\leq n$.

Observe furthermore that $\alpha=1$ is a critical exponent as is 
easily seen when $n=1$. In any dimension, when $f$ has singularities of Poincar\'e type,
$$
\frac{1/C}{\prod_{j=1}^N|s_j|^2 |\log |s_j||^2} \leq f(z) \leq  \frac{C}{\prod_{j=1}^N|s_j|^2 |\log |s_j||^2}
$$
along $D$ we show in Section \ref{sect: asymp} that the solution is locally uniformly bounded on compact subsets of $X\setminus D$ and goes to $-\infty$ along $D$ with a certain rate.  If moreover $f$ has a "very precise" behavior near $D$  it follows from the recent work of Auvray (see \cite{Auv}) that $\f$ goes to $-\infty$ along $D$ like $\sum_{j=1}^N-\log(-\log|s_j|)$. We stress that this condition is very restrictive while in our result we only need a very weak condition on the density. Recall also that in \cite{TY87} the authors constructed "almost complete" K\"ahler Einstein metrics of negative Ricci curvature on $X\setminus D$. In this case the $\Cc^0$ estimate follows easily from the maximum principle.   
\medskip

In order to prove the $\Cc^0$-estimate we follow
and generalize Ko{\l}odziej's approach. 
We introduce and study the $\psi$-Capacity of a Borel subset $E\subset X$,
\begin{equation*}\label{eq: capa two bound def}
\Capa_{\psi} (E):=\sup\left\{\int_E (\omega+dd^c u)^n
\  \big \vert\  u\in \psh(X,\omega),\ \psi-1\leq u\leq \psi \right \}
\end{equation*}
where $\psi\in \psh(X,\omega)$ and here $(\omega+dd^c u)^n$ is the
nonpluripolar Monge-Amp\`ere measure of $u$ (see Section \ref{sect: basic properties} for the definition).
When $\psi$ is constant, $\psi\equiv C$, we recover the 
Monge-Amp\`ere capacity,
$$
\Capo=\Capa_{C}.
$$

A similar notion has been studied in \cite{CKZ05} 
in a local context. These generalized  capacities are interesting 
for themselves. In this paper we only need some of their properties and refer the reader to \cite{DL1} for a more systematic study.

\medskip

One of the advantages of the Ko{\l}odziej's approach for the $\Cc^0$ estimates is that it also works in the case of  
semipositive and big classes as shown in \cite{BGZ08}, \cite{EGZ09}
and \cite{BEGZ10}. 
Thus it is not surprising that our method is still valid in this situation.
  
Let $\theta$ be a smooth  semipositive form on $X$ such that $\int_X \theta^n>0$. Let $f$ be a non-negative function  such that  $\int_X f\omega^n = \int_X \theta^n$. Consider the following degenerate complex Monge-Amp\`ere equation
\begin{equation}\label{eq: MAeq semipositive}
(\theta + dd^c \varphi)^n = f\omega^n .
\end{equation}

It follows from \cite{BBGZ13} that (\ref{eq: MAeq semipositive}) admits a unique normalized solution $\f\in \Ec(X,\theta)$. As in 
the K\"ahler case, it is 
interesting to investigate the regularity properties of 
$\f$ if we know that the density $f$ is smooth, strictly positive outside a divisor $D$ and verifies Condition $\Hc_f$. We can not expect $\f$ to be smooth on 
$X\setminus D$ since $\theta$ may be zero somewhere there. Our result below shows that the solution is smooth on $X\setminus (D\cup E)$, where $E$ is an effective simple normal crossing  divisor on $X$ such that $\{\theta\}-c_1(E)$ is ample.

\begin{mthm}\label{thm: main semi}
{\it Let $(X,\omega)$ be a compact K\"{a}hler manifold of complex 
dimension $n$ and $D$ be an arbitrary divisor on $X$. 
Let $E$ be an effective snc divisor on $X$, and $\theta$ be a smooth  semipositive form on $X$ such that $\int_X \theta^n>0$ and $\{\theta\}-c_1(E)$ is ample.
Assume that $0< f\in \Cc^{\infty}(X\setminus D)$ satisfies Condition $\Hc_f$. Let $\varphi$ be  the unique normalized solution to equation (\ref{eq: MAeq semipositive}). Then $\varphi$ is smooth on $X\setminus (D\cup E)$.} 
\end{mthm}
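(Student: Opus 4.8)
\emph{Strategy.} The plan is to reduce the degenerate equation (\ref{eq: MAeq semipositive}) to the K\"ahler situation already solved in Theorem \ref{thm: main general}. For this I would regularize the background form and, simultaneously, absorb the positivity defect of $\{\theta\}$ into a fixed quasi-plurisubharmonic weight attached to $E$, so that after a change of potential the equation becomes, away from $E$, a Monge-Amp\`ere equation with a genuinely K\"ahler background to which the local a priori estimates apply uniformly.

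\emph{The weight $\Phi_E$.} Since $\{\theta\}-c_1(E)$ is ample, fix a K\"ahler form $\eta\in\{\theta\}-c_1(E)$. Let $s_E$ be the canonical section cutting out $E$ and $h_E$ a smooth Hermitian metric with curvature $\Theta_{h_E}$ representing $c_1(E)$; the forms $\eta$ and $\theta-\Theta_{h_E}$ are cohomologous, so $\eta=\theta-\Theta_{h_E}+dd^c\rho$ for some $\rho\in\Cc^{\infty}(X)$. Using the Poincar\'e--Lelong identity $dd^c\log|s_E|_{h_E}^2=[E]-\Theta_{h_E}$, the function $\Phi_E:=\rho+\log|s_E|_{h_E}^2$ is $\theta$-plurisubharmonic, smooth on $X\setminus E$, tends to $-\infty$ along $E$, and satisfies
\[
\theta+dd^c\Phi_E=\eta\quad\text{on } X\setminus E.
\]
Thus the formal change of unknown $\chi:=\varphi-\Phi_E$ replaces the degenerate background $\theta$ by the K\"ahler form $\eta$ on $X\setminus E$.

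\emph{Approximation and passage to the limit.} For $\varepsilon\in(0,1]$ put $\theta_\varepsilon:=\theta+\varepsilon\omega$, a K\"ahler form, and let $\varphi_\varepsilon$ be the normalized solution of $(\theta_\varepsilon+dd^c\varphi_\varepsilon)^n=c_\varepsilon f\omega^n$ with $c_\varepsilon:=\int_X\theta_\varepsilon^n/\int_X f\omega^n\to1$. Rewriting the density with respect to $\theta_\varepsilon^n$ changes $f$ only by a smooth positive factor, so Condition $\Hc_f$ is preserved and Theorem \ref{thm: main general} gives that each $\varphi_\varepsilon$ is smooth on $X\setminus D$. All $\varphi_\varepsilon$ are $(\theta+\omega)$-plurisubharmonic with $\sup_X\varphi_\varepsilon=0$, hence the family is relatively compact in $L^1$, and the uniqueness and stability theory of \cite{BBGZ13} identifies every limit with the solution $\varphi\in\Ec(X,\theta)$ of (\ref{eq: MAeq semipositive}); thus $\varphi_\varepsilon\to\varphi$. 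Setting $\chi_\varepsilon:=\varphi_\varepsilon-\Phi_E$, the equation reads $(\eta+\varepsilon\omega+dd^c\chi_\varepsilon)^n=c_\varepsilon f\omega^n$ on $X\setminus E$, whose background $\eta+\varepsilon\omega\geq\eta>0$ is uniformly K\"ahler there. Granting uniform bounds $\sup_\varepsilon\|\chi_\varepsilon\|_{L^\infty(K)}<\infty$ on each compact $K\subset X\setminus(D\cup E)$, the Aubin--Yau Laplacian estimate against $\eta$, followed by Evans--Krylov and Schauder iteration, yields $\varepsilon$-uniform $\Cc^k$ bounds on interior compacts; Arzel\`a--Ascoli and a diagonal extraction then give $\chi_\varepsilon\to\chi$ in $\Cc^{\infty}_{\mathrm{loc}}(X\setminus(D\cup E))$, so that $\varphi=\Phi_E+\chi$ is smooth there.

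\emph{The main obstacle.} Everything thus hinges on the uniform $\Cc^0$ bound for $\chi_\varepsilon$, that is, a lower bound for $\varphi_\varepsilon$ on compacts of $X\setminus(D\cup E)$ independent of $\varepsilon$ (the upper bound $\varphi_\varepsilon\leq0$ being free). Because the solution may tend to $-\infty$ along $D$ (cf.\ Theorem \ref{thm: main log}), one cannot hope for a global bound $\varphi_\varepsilon\geq\Phi_E-C$; instead I would produce a global comparison $\varphi_\varepsilon\geq\Phi_E+\Psi_D-C$, where $\Psi_D$ is a $\theta$-plurisubharmonic barrier along $D$ manufactured from Condition $\Hc_f$ as in the proofs of Theorems \ref{thm: main general}--\ref{thm: main log}, both summands being locally bounded on $X\setminus(D\cup E)$. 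The delicate point, and the real heart of the argument, is to keep the constant $C$ independent of $\varepsilon$: I would achieve this through the generalized capacity $\Capa_{\psi}$, whose Ko\l odziej-type estimates control the sublevel sets $\{\varphi_\varepsilon<\Phi_E+\Psi_D-t\}$ by $\Capa_{\psi}$, while Condition $\Hc_f$ bounds the Monge-Amp\`ere mass $c_\varepsilon f\omega^n$ in terms of this capacity uniformly in $\varepsilon$, the uniformity being guaranteed by the fact that the total mass $\int_X c_\varepsilon f\omega^n=\int_X\theta_\varepsilon^n$ stays bounded as $\varepsilon\to0$.
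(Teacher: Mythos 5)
Your skeleton matches the paper's: approximate by the K\"ahler classes $\theta+\varepsilon\omega$ (the paper also regularizes the density, via Demailly regularization of $\psi^{\pm}$, so that each approximant is smooth by Yau's theorem), use the ampleness of $\{\theta\}-c_1(E)$ to build a $\theta$-psh weight $\Phi_E$ that is smooth on $X\setminus E$ and tends to $-\infty$ along $E$ (the paper's $\phi=\sum_j a_j\log|\sigma_j|$, with $\theta+dd^c\phi=\omega_0+[E]$), prove a uniform lower bound of the form $\varphi\geq a\psi^-+\phi-A$ by the generalized-capacity machinery, then a uniform Laplacian estimate, and conclude by Evans--Krylov, Schauder and a diagonal extraction. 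Your $\Cc^0$ plan is exactly the paper's Theorem \ref{thm: uniform poly semi}, and it is correct in outline.

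The genuine gap is in the Laplacian step, which you treat as routine. After the substitution $\chi_\varepsilon=\varphi_\varepsilon-\Phi_E$, the equation $(\eta+\varepsilon\omega+dd^c\chi_\varepsilon)^n=c_\varepsilon f\omega^n$ holds only on the \emph{non-compact} manifold $X\setminus E$, and you propose to extract $\varepsilon$-uniform $\Cc^k$ bounds ``on interior compacts'' from the Aubin--Yau estimate, granting yourself only local $L^\infty$ bounds. But the Aubin--Yau Laplacian estimate is a global maximum-principle argument: there is no purely interior $\Cc^2$ estimate for the complex Monge--Amp\`ere equation depending only on $\|\chi\|_{L^\infty(K)}$ and a smooth positive density (Pogorelov-type examples rule this out), and on $X\setminus E$ the maximum of a test quantity like $\log\tr_{\eta}(\cdot)-A\chi_\varepsilon$ could a priori escape toward $D\cup E$, where nothing is controlled. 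The paper's resolution is Tsuji's trick: run the maximum principle \emph{globally on the compact manifold $X$} with the test function $H=\log\tr_{\omega}(\omega_{\varphi_\varepsilon})+2\psi^--(4C+1)(\varphi_\varepsilon-\phi)$. The term $-(4C+1)(\varphi_\varepsilon-\phi)$ (which is your $-A\chi_\varepsilon$, so your change of variables contains the right ingredient) tends to $-\infty$ along $E$, forcing the maximum into $X\setminus E$, where the key inequality $\Delta_{\omega_{\varphi}}(\varphi_\varepsilon-\phi)\leq n-\tr_{\omega_{\varphi}}(\omega)$ holds because $\omega_0\geq\omega$; and the term $2\psi^-$, absent from your sketch, is indispensable because $f$ degenerates near $D$: the conclusion is $\Delta_\omega\varphi_\varepsilon\leq Ae^{-2\psi^--(4C+1)\phi}$, a bound that is uniform in $\varepsilon$ but blows up near $D\cup E$, not a locally produced interior bound. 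Closing this argument at the maximum point also requires the \emph{global} $\Cc^0$ estimate $\varphi_\varepsilon\geq\frac{1}{4C+1}\psi^-+\phi-A_3$, not merely the local boundedness you assume; so the $\Cc^0$ and $\Cc^2$ steps are intertwined in a way your division into ``main obstacle'' plus ``routine local estimates'' does not capture.
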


\medskip

\noindent{\bf Remark.} 
{\it The condition we impose on $\{\theta\}$ is natural in studying K\" ahler Einstein metrics on singular varieties (see \cite{BG13}). }

\medskip

Let us say some words about the  organization of the paper. 
In Section \ref{sect: basic properties}, we introduce the generalized $\psi$-Capacity,
and establish their basic properties. 
The proof of Theorem \ref{thm: main general} will be given in Section \ref{sect: smooth  general}. We provide some volume-capacity estimates in Section \ref{sect: vol cap}. We then use these  to  prove Theorem  \ref{thm: main log} and discuss about the asymptotic behavior of solutions near the divisor in Section \ref{sect: proof main log}. Finally we consider the case of semipositive and big classes in Section \ref{sect: semi}.

\bigskip

\noindent {\bf Acknowledgments.} It is our pleasure to thank our advisors
Vincent Guedj and Ahmed Zeriahi for providing constant help, many suggestions and encouragements. We also thank  Robert Berman and Bo Berndtsson for very useful comments. We are indebted to S\' ebastien Boucksom and Henri Guenancia for a very careful reading of a preliminary version of this paper, for their suggestions which improve the presentation of the paper.

\section{Preliminaries}\label{sect: basic properties} 
Let $(X,\omega)$ be a compact K\"ahler manifold.  
We first recall basic facts about finite energy classes of 
$\omega$-psh functions on $X$. The reader can find  more details
about these  in \cite{GZ07}.

\subsection{Finite energy classes}
\begin{defi}
We let $\psh(X,\omega)$ denote the class of $\omega$-plurisubharmonic functions ($\omega$-psh for short) on $X$, i.e. the class of functions 
$\f$ such that locally  $\f= \rho+ u$, where $\rho$ is a local potential of $\omega$ and $u$ is a plurisubharmonic function.
\end{defi}

Let $\f$ be some (unbounded) $\omega$-psh function on $X$ 
and consider $\f_j:=\max(\f, -j)$ the canonical approximation by 
bounded $\omega$-psh functions. 
It follows from \cite{GZ07} that 
$$
{\ind}_{\{\f_j >-j \}} (\omega+dd^c \f_j)^n
$$ 
is a non-decreasing sequence of Borel measures. 
We denote by $(\omega+dd^c\varphi)^n$ (or $\MA(\varphi)$ for short if 
$\omega$ is fixed and no confusion can occur) this limit:
$$
\MA(\varphi)=(\omega+dd^c \varphi)^n = \lim_{j\to+\infty}
{\ind}_{\{\f_j >-j \}} (\omega+dd^c \f_j)^n.
$$
It was shown in \cite{GZ07} that the Monge-Amp\`ere measure 
$\MA(\varphi)$ puts no mass on pluripolar sets. This is the non-pluripolar part of the Monge-Amp\`ere of $\f$. Note that its total mass $ \MA(\f)(X)$ can take value in $\left[0, \int_X\omega^n \right]$.
\begin{defi}
We let $\Ec(X,\omega)$ denote the class of $\omega$-psh function having full Monge-Amp\`ere mass:
$$
\EcX:= \left\{\f\in \psh(X,\omega) \setdef \int_X \MA(\varphi) =\int_X \omega^n\right\}.
$$
\end{defi}
Let us stress that $\omega$-psh functions with full 
Monge-Amp{\`e}re mass have mild singularities. Indeed, it was shown
in \cite[Corollary 1.8]{GZ07} that 
$$
\nu(\f,x)=0, \forall \f\in \EcX,\  x\in X.
$$
 
We also recall that, for every 
$\f\in\Ec(X,\omega)$ and $\psi\in \psh(X,\omega)$, the 
\emph{generalized comparison principle} holds (see \cite[Corollary 2.3]{BEGZ10}), namely 
$$
\int_{\{\f<\psi\}} (\omega+dd^c  \psi)^n \leq \int_{\{\f<\psi\}} (\omega+dd^c \f)^n.
$$
Let $\chi:\R^-\to\R^-$ be an increasing function such 
that $\chi(0)=0$ and $\chi(-\infty)=-\infty$. 
\begin{defi} 
{\it Let $\Ec_\chi(X,\omega)$ denote the set of $\omega$-psh functions with finite $\chi$-energy,
$$\Ec_{\chi}(X,\omega):= \{\f\in \Ec(X,\omega) \;\,  \setdef \;\, \chi(-|\f|)\in L^1(\MA(\f)) \}.$$ }
\end{defi} 

For $p>0$, we  use the notation
$$
{\Ec}^p(X,\omega):=\Ec_{\chi}(X,\omega),
\text{ when } \chi(t)=-(-t)^p.
$$

\subsection{The $\p$-Capacity}
\begin{defi}
Let $\psi\in \psh(X,\omega)$. We define the $\psi$-Capacity of a Borel subset $E\subset X$ by
$$
\Capis(E):= \sup\left\{ \int_E \MA (u) \ \vert \ u\in \psh(X, \omega) , \ \psi-1\leq u\leq \psi\right\}.
$$
\end{defi}

Then the  Monge-Amp\`ere capacity corresponds to $\psi \equiv {\rm constant}$ (see \cite{BT82}, \cite{Kol03}, \cite{GZ05}). We list below some basic properties of the $\p$-Capacity.
\begin{prop}\label{prop: basic prop of psi capacity}
\begin{itemize}
\item[(i)] If $E_1\subset E_2\subset X$ then $\Capis(E_1)\leq \Capis(E_2)$ .

\item[(ii)]  If $E_1, E_2,...$ are Borel subsets of $X$ then 
$$
\Capis\left(\bigcup_{j=1}^{\infty} E_j\right)\leq \sum_{j=1}^{+\infty} \Capis (E_j) .
$$

\item[(iii)] If $E_1\subset E_2\subset ...$ are Borel subsets of $X$ then
$$
\Capis \left(\bigcup_{j=1}^{\infty} E_j\right) = \lim_{j\to+\infty} \Capis (E_j) .
$$
\end{itemize}
\end{prop}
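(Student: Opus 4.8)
The plan is to derive all three statements from a single observation: for every competitor $u\in\psh(X,\omega)$ with $\psi-1\leq u\leq\psi$, the non-pluripolar Monge-Amp\`ere measure $\MA(u)$ is a \emph{fixed} positive Borel measure on $X$ of finite total mass (at most $\int_X\omega^n$, as recalled above). Thus $\Capis$ is a supremum, over the admissible family, of the set functions $E\mapsto\int_E\MA(u)$, each of which is itself a positive measure. Properties (i) and (ii) then transfer directly from the corresponding properties of these measures. For (i), if $E_1\subset E_2$ then $\int_{E_1}\MA(u)\leq\int_{E_2}\MA(u)\leq\Capis(E_2)$ for every admissible $u$, since $\MA(u)\geq 0$; taking the supremum over $u$ gives $\Capis(E_1)\leq\Capis(E_2)$. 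For (ii), the pointwise bound $\ind_{\bigcup_j E_j}\leq\sum_j\ind_{E_j}$ yields $\int_{\bigcup_j E_j}\MA(u)\leq\sum_j\int_{E_j}\MA(u)\leq\sum_j\Capis(E_j)$ for every admissible $u$, and again I take the supremum over $u$.

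The substance of the statement is the continuity property (iii) under increasing unions. Write $E:=\bigcup_j E_j$. One inequality is free: by (i) the sequence $\Capis(E_j)$ is non-decreasing and bounded above by $\Capis(E)$, so $\lim_j\Capis(E_j)\leq\Capis(E)$. For the reverse inequality I would argue by $\vep$-approximation of the supremum. Fix $\vep>0$ and choose an admissible $u$ with $\int_E\MA(u)\geq\Capis(E)-\vep$. Since $\MA(u)$ is a positive measure and $E_j\uparrow E$, continuity of $\MA(u)$ from below gives $\int_E\MA(u)=\lim_j\int_{E_j}\MA(u)$, so for $j$ large enough $\Capis(E_j)\geq\int_{E_j}\MA(u)\geq\Capis(E)-2\vep$. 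Letting $j\to+\infty$ and then $\vep\to 0$ yields $\lim_j\Capis(E_j)\geq\Capis(E)$, and combining the two inequalities proves (iii).

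The only genuinely non-formal point is the interchange of the supremum defining $\Capis$ with the set-limit in (iii); this is exactly what the $\vep$-argument handles, the key being that the bound $\lim_j\Capis(E_j)$ is independent of the chosen competitor $u$, so it survives passing to the supremum. I expect no real obstacle beyond this, since the admissibility constraint $\psi-1\leq u\leq\psi$ plays no role in the measure-theoretic manipulations: it enters only through the fact that each admissible $u$ is $\omega$-psh and hence $\MA(u)$ is a positive Borel measure of finite mass, which is all that is used.
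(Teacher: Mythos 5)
Your proof is correct. Note that the paper itself states this proposition without proof, treating it as elementary, so there is no argument of the authors to compare against; yours is the natural one. Parts (i) and (ii) are indeed immediate from the positivity of each fixed measure $\MA(u)$, and your treatment of (iii) isolates the one substantive point correctly: for each fixed admissible $u$ the set function $E\mapsto\int_E\MA(u)$ is a genuine positive Borel measure (the non-pluripolar Monge-Amp\`ere measure is an increasing limit of Borel measures, hence countably additive, with total mass at most $\int_X\omega^n$ as recalled in the paper), so it is continuous under increasing unions, and this continuity survives the supremum over $u$ because the resulting bound $\lim_j\Capis(E_j)\geq\int_E\MA(u)$ is uniform in the competitor. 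The a priori bound $\Capis(E)\leq\int_X\omega^n<+\infty$ also ensures your $\vep$-approximation of the supremum is legitimate.
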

The following results are elementary and important for the sequel. We stress that these results still hold in the case when $\omega$ is merely semipositive and big rather than K\" ahler.

\begin{lem}\label{lem: right continuous}
Let $\psi\in \psh(X,\omega)$ and $\f\in \EcX$. Then the function
$$
H(t):= \Capis(\{\f<\psi-t\}) , \ t\in \R ,
$$
is right-continuous and $H(t)\to 0$ as $t\to+\infty$.
\end{lem}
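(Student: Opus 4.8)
The plan is to study the super-level sets $E_t := \{\f < \psi - t\}$, which form a \emph{decreasing} family in $t$. Since $\Capis$ is monotone by Proposition \ref{prop: basic prop of psi capacity}(i), the function $H$ is non-increasing, and I would prove the two assertions separately.

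For right-continuity, fix $t_0$ and take $t_j := t_0 + 1/j \downarrow t_0$. Because $E_t$ decreases in $t$, the sets $E_{t_j}$ \emph{increase} with $j$, and a pointwise check shows $\bigcup_j E_{t_j} = \{\f < \psi - t_0\} = E_{t_0}$ (a point $x$ lies in the union iff $\psi(x) - \f(x) > t_0$). Proposition \ref{prop: basic prop of psi capacity}(iii) then gives $\lim_j H(t_j) = \Capis(E_{t_0}) = H(t_0)$; since $H$ is monotone, this sequential limit along $t_j\downarrow t_0$ is exactly the right-hand limit, so $H$ is right-continuous.

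For the decay at $+\infty$, the key idea is to dominate $H(t)$ by the mass that the \emph{genuine} measure $\MA(\f)$ assigns to a super-level set; this is the crux, since capacities need not be continuous along decreasing families, whereas finite Borel measures are. Let $u$ be admissible, i.e. $\psi - 1 \le u \le \psi$. On $E_t$ one has $\f < \psi - t \le u - (t-1)$, so $E_t \subseteq \{\f < u - (t-1)\}$; applying the generalized comparison principle to $\f \in \EcX$ and the $\omega$-psh function $u - (t-1)$, whose Monge-Amp\`ere measure equals $\MA(u)$, yields
$$\int_{E_t} \MA(u) \le \int_{\{\f < u-(t-1)\}} \MA(u) \le \int_{\{\f < u-(t-1)\}} \MA(\f) \le \int_{\{\f<\psi-(t-1)\}} \MA(\f),$$
where the last step uses $u \le \psi$. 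Taking the supremum over all admissible $u$ gives $H(t) \le \MA(\f)\big(\{\f < \psi - (t-1)\}\big)$.

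To finish, I would let $t \to +\infty$: the sets $\{\f < \psi - (t-1)\}$ decrease to $\{\f - \psi = -\infty\}$, which is contained in $\{\f = -\infty\} \cup \{\psi = -\infty\}$ and hence pluripolar. Since $\MA(\f)$ is a finite measure placing no mass on pluripolar sets, continuity of a finite measure from above forces $\MA(\f)\big(\{\f < \psi - (t-1)\}\big) \to 0$, whence $H(t) \to 0$. The only delicate point is the realization that one must pass through the measure $\MA(\f)$ rather than reason with the capacity directly; the generalized comparison principle together with the built-in unit gap between $\psi - 1$ and $\psi$ in the definition of $\Capis$ are precisely what make this reduction go through.
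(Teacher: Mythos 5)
Your proof is correct and follows essentially the same route as the paper: right-continuity via Proposition \ref{prop: basic prop of psi capacity}(iii), and the decay at $+\infty$ by playing an admissible candidate $u$ against $\f$ through the generalized comparison principle, thereby bounding $H(t)$ by the $\MA(\f)$-mass of a sublevel set shrinking to a pluripolar set. The only cosmetic difference is that the paper normalizes $\psi\leq 0$ so that its final bound is $\int_{\{\f<-t+1\}}\MA(\f)$, whereas you keep $\psi$ inside the sublevel set $\{\f<\psi-(t-1)\}$; both conclusions are identical because $\MA(\f)$ is a finite measure charging no pluripolar set.
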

\begin{proof}
The right-continuity of $H$ follows from (iii) of Proposition \ref{prop: basic prop of psi capacity}. Let us prove the second 
statement. We can assume that $\psi\leq 0$ on $X$. Fix $v\in \psh(X,\omega)$
such that $\p-1\leq v \leq \p$.  
We apply the comparison principle to obtain
\begin{eqnarray*}
\int_{\{\f<\psi-t\}} \MA(v) \leq  \int_{\{\f<v-t+1\}}\MA(v)  
\leq  \int_{\{\f<-t+1\}} \MA(\f).
\end{eqnarray*}
The last term goes to zero as $t$ goes to 
$+\infty$ since $\f\in \EcX$.
\end{proof}

\begin{lem}\label{lem: classical cap and cap psi}
Let $(X,\omega)$ be a compact K\" ahler manifold and $\psi\in \psh(X,\omega/2)$. Then we have 
$$
\Capa_{\omega/2}(E) \leq \Capis(E),
$$
where $\Capa_{\omega/2}$ is the Monge-Amp\`ere Capacity with respect to the K\" ahler metric $\omega/2$ introduced in \cite{Kol03} and studied in \cite{GZ05}, and 
$\Capis$ is the generalized $\psi$-Capacity with respect to the 
K\"ahler metric $\omega$. 
\end{lem}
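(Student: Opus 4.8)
The plan is to compare the two capacities competitor by competitor: starting from any admissible function $u$ in the definition of $\Capa_{\omega/2}(E)$, I would manufacture an admissible function $v$ in the definition of $\Capis(E)$ whose Monge-Amp\`ere measure dominates that of $u$ on $E$. Recall that $\Capa_{\omega/2}(E)$ is the supremum of $\int_E (\omega/2 + dd^c u)^n$ over $u\in\psh(X,\omega/2)$ with $0\le u\le 1$, while $\Capis(E)$ is the supremum of $\int_E \MA(v)$ over $v\in\psh(X,\omega)$ with $\psi-1\le v\le\psi$. Producing a measure-increasing map from the first family of competitors into the second immediately yields the inequality after passing to suprema.

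First I would fix $u\in\psh(X,\omega/2)$ with $0\le u\le 1$ and set $v:=\psi+u-1$. Since $\psi\in\psh(X,\omega/2)$, both $\omega/2+dd^c\psi$ and $\omega/2+dd^c u$ are positive, so
$$
\omega+dd^c v=\left(\frac{\omega}{2}+dd^c\psi\right)+\left(\frac{\omega}{2}+dd^c u\right)\ge 0,
$$
i.e. $v\in\psh(X,\omega)$. The constraint $0\le u\le 1$ translates into $\psi-1\le v\le\psi$, so $v$ is admissible for $\Capis(E)$.

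The core of the argument is the comparison at the level of measures. Writing $T:=\omega/2+dd^c\psi$ and $S:=\omega/2+dd^c u$, I would expand $\MA(v)=(T+S)^n$ by multilinearity of the nonpluripolar product,
$$
\MA(v)=(T+S)^n=\sum_{k=0}^{n}\binom{n}{k}\,T^k\wedge S^{n-k},
$$
and keep only the $k=0$ term. As each mixed term $T^k\wedge S^{n-k}$ is a nonnegative measure, this gives $\MA(v)\ge S^n=(\omega/2+dd^c u)^n$ on $X$. Integrating over $E$ and taking the supremum over all admissible $u$ then yields $\Capa_{\omega/2}(E)\le\Capis(E)$.

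The main obstacle is precisely the measure inequality $\MA(v)\ge(\omega/2+dd^c u)^n$: one must justify the multilinear expansion and the positivity of the mixed nonpluripolar products in the regime where $\psi$ (hence $T$) is unbounded. This is exactly the setting of the nonpluripolar calculus of \cite{BEGZ10}; moreover, since $u$ is bounded, $S^{n-k}$ is a genuine Bedford--Taylor current (and $S^n$ charges no pluripolar set, so it agrees with its nonpluripolar version), whence the products are well defined and positive and the expansion is legitimate. Everything else — the admissibility check and the passage to suprema — is formal.
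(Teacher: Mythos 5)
Your proof is correct and follows essentially the same route as the paper: the paper likewise takes an admissible $u$ for $\Capa_{\omega/2}$, forms the candidate $\psi+u$ (your $v$, up to an additive constant), and uses the measure domination $(\omega/2+dd^c u)^n\leq \MA(\psi+u)$ before passing to suprema. The only difference is that you spell out, via multilinearity and positivity of the nonpluripolar products of \cite{BEGZ10}, the inequality that the paper dismisses as ``not difficult to see,'' which is a welcome clarification since $\psi$ may be unbounded.
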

We stress that the above result insures $\Capis(E)>0$ for any Borel subset $E$ which is not pluripolar.
\begin{proof}
Let $u\in \psh(X,\omega/2)$ be such that $-1\leq u\leq 0$. Then $\varphi:=\psi+u$ is a candidate defining $\Capis$. Using the definition of the Monge-Amp\`ere meausure it is not difficult to see that
$$
\int_E (\omega/2 + dd^c u)^n \leq \int_E (\omega+dd^c \varphi)^n\leq \Capis(E),
$$
and taking the supremum over all $u$ we get the result.
\end{proof}
The following result generalizes Lemma 2.3 in \cite{EGZ09}.
\begin{prop}\label{prop: EGZ09 SKE generalized}
Let $\varphi\in \Ec(X,\omega)$, $\psi\in \psh(X,\omega)$. Then  for all $t>0$ and $0\leq s\leq 1$ we have 
$$
s^n \Capis(\{\varphi<\psi-t-s\})\leq \int_{\{\varphi<\p-t\}}\MA(\varphi).
$$
\end{prop}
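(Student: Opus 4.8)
The plan is to fix an admissible competitor $u\in\psh(X,\omega)$ with $\psi-1\le u\le\psi$, build from it an auxiliary $\omega$-psh function squeezed between the two level sets in the statement, and then feed this into the generalized comparison principle quoted earlier. Concretely I would set
$$
\eta:=su+(1-s)\psi-t,
$$
which is $\omega$-psh because $su+(1-s)\psi$ is a convex combination of the $\omega$-psh functions $u$ and $\psi$ (recall $0\le s\le 1$) and subtracting the constant $t$ does not affect plurisubharmonicity. Since $s=0$ makes the left-hand side vanish, I may assume $s>0$.

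The core of the argument is the pair of pointwise inequalities $\psi-t-s\le\eta\le\psi-t$, which immediately yield the inclusions
$$
\{\varphi<\psi-t-s\}\subset\{\varphi<\eta\}\subset\{\varphi<\psi-t\}.
$$
The right inequality $\eta\le\psi-t$ is exactly $su\le s\psi$, i.e. $u\le\psi$; the left inequality $\psi-t-s\le\eta$ reduces (for $s>0$) to $u\ge\psi-1$. Both are precisely the admissibility constraints on $u$, so that, pleasantly, no assumption on the sign of $\psi$ is needed.

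Next I would compare the Monge-Amp\`ere masses. Writing $T_1=\omega+dd^c u$ and $T_2=\omega+dd^c\psi$, one has $\omega+dd^c\eta=sT_1+(1-s)T_2$, and multilinearity together with the positivity of the nonpluripolar mixed products gives
$$
\MA(\eta)=\langle(sT_1+(1-s)T_2)^n\rangle\ge s^n\langle T_1^n\rangle=s^n\MA(u).
$$
Combining this with the two inclusions and applying the generalized comparison principle to $\varphi\in\EcX$ and $\eta\in\psh(X,\omega)$, I obtain
$$
s^n\int_{\{\varphi<\psi-t-s\}}\MA(u)\le s^n\int_{\{\varphi<\eta\}}\MA(u)\le\int_{\{\varphi<\eta\}}\MA(\eta)\le\int_{\{\varphi<\eta\}}\MA(\varphi)\le\int_{\{\varphi<\psi-t\}}\MA(\varphi).
$$
Taking the supremum over all admissible $u$ then produces the claimed estimate, since the set $\{\varphi<\psi-t-s\}$ does not depend on $u$.

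The steps requiring the most care are the two set inclusions, where one must invoke the admissibility bounds $\psi-1\le u\le\psi$ in exactly the correct direction, and the measure inequality $\MA(\eta)\ge s^n\MA(u)$. I expect the latter to be the main technical point: nonpluripolar products are only multilinear in a suitable sense, and one must verify that the binomial expansion into mixed terms $\langle T_1^k\wedge T_2^{n-k}\rangle$ consists of \emph{positive} measures before discarding the nonnegative mixed contributions. This is exactly where I would invoke the results of \cite{BEGZ10}.
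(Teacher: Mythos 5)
Your proof is correct and follows essentially the same route as the paper: both arguments use the auxiliary function $su+(1-s)\psi-t$, the two set inclusions coming from $\psi-1\le u\le\psi$, the multilinearity/positivity estimate $\MA(su+(1-s)\psi)\ge s^n\MA(u)$, and the generalized comparison principle of \cite[Corollary 2.3]{BEGZ10}, before taking the supremum over admissible $u$. The only difference is cosmetic (the order in which you apply the inclusion and the measure inequality in the chain), so there is nothing to add.
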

                                                                                                                                                                                                         \begin{proof}                                                                                                                                                                                       Let $u\in \psh(X,\omega)$ such that $\psi-1\leq u\leq \psi$. 
Observe the following trivial inclusion                                                                                                                                                                                                $$
\{\varphi<\psi-t-s\}\subset \{\varphi<s u +(1-s)\psi-t\} 
\subset \left\{\varphi<\psi-t \right\} .                                                                                                                                                                                                     
$$
It thus follows from the {\it generalized comparison principle} 
(see \cite[Corollary 2.3]{BEGZ10}) that
                                                                                                                                                                                                                                                                                                                                    \begin{eqnarray*}
s^n \int_{\{\varphi<\psi-t-s\}}\MA(u) 
& \leq & \int_{\{\varphi<\p-t-s\}}\MA(s u+ (1-s)\psi) \\
&\leq & \int_{\{\varphi<s u +(1-s)\p-t\}}\MA(s u+ (1-s)\psi)\\
&\leq & \int_{\{\varphi<\p-t\}}\MA(\varphi).
\end{eqnarray*}                                                                                                                                                                                        
By taking the supremum over all candidates $u$ we get the result.
\end{proof}

\section{Smooth solution in a general case}\label{sect: smooth general}

In this section we prove Theorem \ref{thm: main general}.  The most difficult part is the $\Cc^0$ estimate which is followed by a simple observation: if $\varphi\in \EcX$, $\sup_X \f=0$ is such that $\MA(\f) \leq e^{-\phi}\omega^n$, for some quasi-psh function $\phi$, then $\varphi$ is bounded from below by $a\phi-A$, for some positive constants $a,A$.

\subsection{Uniform estimate} In this subsection we assume that $0\leq f\in L^1(X)$ is such that $\int_X f\omega^n= \int_X \omega^n$.
Let $\f\in \Ec(X,\omega)$ be the unique normalized solution to 
\begin{equation}\label{eq: MA kah uniform}
(\omega+dd^c \f)^n =f\omega^n.
\end{equation}
Here we normalize $\f$ such that $\sup_X \f=0$. We prove the following $\Cc^0$ estimate:

\begin{thm}\label{thm: general uniform}
Assume that $f\leq e^{-\phi}$ for some quasi-plurisubharmonic function
$\phi$. Let $\f\in \EcX$ be the unique normalized solution to (\ref{eq: MA kah uniform}).
Then for any  $a>0$, there exists $A>0$ depending on 
$\int_X e^{-2\f/a } \omega^n$ such that
$$
\varphi \geq a \phi -A.
$$
Moreover, if $\phi$ is  bounded in a compact subset $K\subset X$ then $\f$ is continuous on $K$.
\end{thm}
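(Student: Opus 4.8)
The goal is the a priori bound $\f \geq a\phi - A$ for the normalized solution of $\MA(\f) = f\omega^n$ with $f \leq e^{-\phi}$, and I want to follow Kołodziej's comparison-capacity scheme, now run against the generalized $\psi$-Capacity with the natural choice $\psi := a\phi$ (rescaled to be a valid $\omega$-psh function). The strategy is to control the sublevel sets $\{\f < \psi - t\}$ by their $\Capis$-size, feed this into Proposition~\ref{prop: EGZ09 SKE generalized}, and close the loop with an iteration (De~Giorgi-type) argument. First I would set up the function $H(t) := \Capis(\{\f < \psi - t\})$, which by Lemma~\ref{lem: right continuous} is right-continuous with $H(t) \to 0$ as $t \to +\infty$. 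The aim is to show $H(t_0) = 0$ for some explicit $t_0 = A$, because $\Capis$ dominates the classical capacity $\Capa_{\omega/2}$ by Lemma~\ref{lem: classical cap and cap psi}, so $H(t_0) = 0$ forces $\{\f < \psi - t_0\}$ to be pluripolar, hence empty (it is open, up to the usual quasi-continuity argument), giving the pointwise bound.

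**The main estimate.** The crux is to bound $\int_{\{\f < \psi - t\}} \MA(\f)$ from above in terms of $H(t)$ to a power strictly larger than one. Here is where the density hypothesis enters: on $\{\f < \psi - t\} = \{\f < a\phi - t\}$ one has $\phi \geq (\f + t)/a$, so
\begin{equation*}
\int_{\{\f < \psi - t\}} \MA(\f) = \int_{\{\f < \psi - t\}} f\,\omega^n \leq \int_{\{\f < \psi - t\}} e^{-\phi}\,\omega^n \leq e^{-t/a}\int_{\{\f < \psi - t\}} e^{-\f/a}\,\omega^n.
\end{equation*}
The factor $e^{-t/a}$ provides the essential exponential decay in $t$. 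The remaining integral I would estimate using the finiteness of $\int_X e^{-2\f/a}\omega^n$ together with a volume-capacity comparison: a Hölder/Cauchy--Schwarz split separates $e^{-\f/a}$ (controlled in $L^2$ by the Skoda-type hypothesis) from the indicator of the sublevel set, whose $\omega^n$-volume is in turn dominated by a superlinear power of $\Capa_{\omega/2}(\{\f<\psi-t\}) \leq H(t)$. This is the standard volume-versus-capacity inequality on a compact Kähler manifold, and it is exactly what lets me write $\mathrm{vol}(\{\f<\psi-t\}) \lesssim \exp(-c\,H(t)^{-1/n})$ or an analogous superlinear bound.

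**Closing the iteration.** Combining the previous display with Proposition~\ref{prop: EGZ09 SKE generalized} applied at level $t$ with increment $s$ yields
\begin{equation*}
s^n H(t+s) \leq \int_{\{\f<\psi - t\}}\MA(\f) \leq e^{-t/a}\,\Phi\bigl(H(t)\bigr),
\end{equation*}
where $\Phi$ encodes the superlinear volume-capacity bound. Choosing $s = s(t)$ appropriately (the classical Kołodziej choice makes $s$ proportional to a power of $H(t)$), this is a functional inequality of the form $H(t+s(t)) \leq \tfrac{1}{2}H(t)$ that forces $H$ to vanish beyond a finite threshold $t_0$; the value of $t_0$ depends only on $n$, $a$, the Skoda constant $\int_X e^{-2\f/a}\omega^n$, and the volume-capacity constants, which is precisely the claimed dependence of $A$. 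I expect the \textbf{main obstacle} to be the bookkeeping in the volume-capacity step: one must ensure the superlinearity of $\Phi$ near $0$ is strong enough to run the iteration, and that all constants genuinely depend only on $\int_X e^{-2\f/a}\omega^n$ and not on $\f$ itself. For the final continuity assertion, once $\phi$ is bounded on a compact set $K$ the lower bound $\f \geq a\phi - A$ makes $\f$ locally bounded there; combined with $\sup_X \f = 0$ and the stability/regularization theory for bounded solutions of the Monge-Ampère equation with $L^1$ density, upper and lower semicontinuity match up to give continuity on $K$.
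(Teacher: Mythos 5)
Your treatment of the main lower bound $\f \ge a\phi - A$ is, in outline, exactly the paper's proof: the same barrier $\psi := a\phi$ (with $a$ small enough that $\psi \in \psh(X,\omega/2)$), the same three ingredients (Lemma \ref{lem: right continuous}, Lemma \ref{lem: classical cap and cap psi}, Proposition \ref{prop: EGZ09 SKE generalized}), the same H\"older split of $\int_{\{\f<\psi-t\}}e^{-\f/a}\omega^n$ against $\int_X e^{-2\f/a}\omega^n$, the same volume--capacity inequality $\vol_{\omega} \le \exp\left(-C_1\Capo^{-1/n}\right)$ from \cite{GZ05}, and the same De Giorgi-type iteration (which the paper delegates to \cite[Lemma 2.4]{EGZ09}). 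Two details to tighten: (i) $\Capis(\{\f<\psi-t_0\})=0$ does not make that set open or empty; the correct conclusion is that $\f \ge \psi - t_0$ holds outside a pluripolar set, hence everywhere, since an inequality between quasi-psh functions valid almost everywhere propagates pointwise; (ii) for $A$ to depend only on $\int_X e^{-2\f/a}\omega^n$ you must also locate a starting level $t_1$ with $H(t_1)$ below the critical threshold in a controlled way, which follows from your own estimate because $\vol_{\omega}(\{\f<\psi-t\}) \le \vol_{\omega}(\{\f<-t\}) \le e^{-t}\int_X e^{-2\f/a}\omega^n$ once $a\le 2$. Neither point is a real gap.

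The continuity assertion is where your proposal breaks down. You appeal to ``stability/regularization theory for bounded solutions of the Monge--Amp\`ere equation with $L^1$ density''; no such theory exists. Ko{\l}odziej's continuity results \cite{Kol98} require $f\in L^p$ with $p>1$ (or a comparable volume--capacity domination), and for a density that is merely $L^1$ it is not known that a locally bounded solution must be continuous. Moreover the hypothesis is weaker than your sketch assumes: $\phi$ is only bounded \emph{on} the compact set $K$, not on a neighborhood of it (being usc, $\phi$ can drop to $-\infty$ arbitrarily close to $K$), so $K$ may have empty interior, no local regularity argument can even be formulated near $K$, and boundedness of $\f$ there only yields the upper semicontinuity you already have for free --- lower semicontinuity is precisely what is missing. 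The paper instead proves continuity by a second capacity iteration with \emph{moving} barriers: pick continuous $\omega$-psh functions $\f_j \downarrow \f$, set $\psi_j := \lambda\f_j + (1-\lambda)\psi - (1-\lambda)A - 2$ (so that $\psi_j$ is $\tfrac{1+\lambda}{2}\omega$-psh and $\psi_j \le \f_j - 2$), and run the same scheme for $H_j(t) := \left[\Capa_{\psi_j}(\{\f<\psi_j - t\})\right]^{1/n}$. The crucial new input, which nothing in your sketch supplies, is that $H_j(0)\to 0$ as $j\to\infty$: by the comparison principle, $\Capa_{\psi_j}(\{\f<\psi_j\}) \le \int_{\{\f<\f_j-1\}}\MA(\f) \le \frac{1}{-\chi(-1)}\int_X(-\chi\circ(\f-\f_j))\,f\omega^n \to 0$, using that $\f \in \Ec_{\chi}(X,\omega)$ for some weight $\chi$. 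Then \cite[Remark 2.5]{EGZ09} gives $\f \ge \lambda\f_j + (1-\lambda)\psi - C_2H_j(0)$ up to the additive constants of the construction, whence $\f_j \to \f$ uniformly on $K$ (let $j\to\infty$, then $\lambda\to 1$), and a uniform limit of continuous functions is continuous. Your second half needs to be replaced by an argument of this kind.
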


\begin{proof}
We can assume that $\phi\leq 0$. Observe that it is enough to prove Theorem \ref{thm: general uniform} for $a>0$ small enough. Fix $a>0$
such that $\p:=a\phi$ belongs to $\psh(X,\omega/2)$.
It follows from Lemma \ref{lem: classical cap and cap psi} that 
$\Capo \leq 2^n\Capa_{\omega/2}\leq 2^n \Capis$. 
Fix $s\in [0,1]$, $t>0$ and apply Proposition \ref{prop: EGZ09 SKE generalized} to get
\begin{equation}\label{eq: general uniform 1}
s^n\Capis(\f<\p-t-s) \leq \int_{\{\f<\p-t\}} \MA(\f). 
\end{equation}
By assumption on $f$ we have
$$
\int_{\{\f<\p-t\}} \MA(\f) \leq \int_{\{\f<\p-t\}} e^{-\f/a}
e^{\p/a}\MA(\f)\leq \int_{\{\f<\p-t\}} e^{-\f/a}\omega^n.
$$
It follows from \cite{GZ05} that 
$$
\vol_{\omega} \leq \exp{\left(\frac{-C_1}{\Capo^{1/n}}\right)}.
$$
Thus using H\"older inequality we get from (\ref{eq: general uniform 1}) that
$$
s^n\Capis(\f<\p-t-s) \leq C_2 \left(\Capo(\f<\p-t)\right)^2 \leq C_3
\left(\Capis(\f<\p-t)\right)^2,
$$
where $C_3$ depends only on $\int_X e^{-2\f/a}\omega^n$. Now, consider the following function
$$
H(t)=\left[\Capa_{\psi}(\{\varphi<\psi-t\})\right]^{1/n},\ t>0.
$$
By the arguments above we get 
$$
s H(t+s)\leq C_4 H(t)^2,  \ \forall t>0, \forall s\in [0,1],
$$
where $C_4>0$ depends only on $\int_X e^{-2\f/a}\omega^n$. It follows 
from Lemma \ref{lem: right continuous} that $H$ is right-continuous 
and $H(+\infty)=0$. Thus  by \cite[Lemma 2.4]{EGZ09} we get $\varphi \geq \psi-C_5$, where $C_5$ only depends on 
$\int_X e^{-2\f/a}\omega^n$.

Now, assume that $\phi$ is bounded on a compact subset $K\subset X$. 
Set $\p:=a\phi$ as above. Let us prove that $\f$ is continuous on $K$. For convenience, we normalize $\varphi$ so that $\sup_X \varphi=-1$. Let $0\geq \varphi_j$ be a sequence of continuous 
$\omega$-psh functions on $X$ decreasing to $\varphi$.
Fix $\lambda\in (0,1)$. For each $j\in \N$ set   
$$
\psi_j:=\lambda \varphi_j + (1-\lambda)\psi - (1-\lambda) A - 2.
$$ 
Then $\p_j$ belongs to $\psh(X,\frac{1+\lambda}{2} \omega)$ and 
 $\psi_j \leq \varphi_j - 2$. Set 
$$
H_j(t):= \left[\Capa_{\psi_j}\left(\{\varphi<\psi_j-t\}\right)\right]^{1/n}, t>0.
$$
We can argue as above and use Proposition \ref{prop: EGZ09 SKE generalized} to get 
$$
s H_j(t+s)\leq C_1H_j(t)^2, \ \forall t>0, \ \forall s\in [0,1],
$$
where $C_1>0$ depends on $\int_X e^{-2\f/(1-\lambda)a }$. Let
 $\chi: \R^-\rightarrow \R^-$ be an increasing convex weight such that
 $\chi(0)=0$, $\chi(-\infty)=-\infty$ and $\varphi\in \Ec_{\chi}(X,\omega)$. 
By the comparison principle we also get
\begin{eqnarray*}
\Capa_{\psi_j}(\varphi<\psi_j) &\leq & \int_{\{\varphi<\psi_j+1\}} \MA
(\varphi)\leq \int_{\{\varphi<\varphi_j-1\}} \MA(\varphi)\\
&\leq &\frac{1}{-\chi(-1)}\int_X (-\chi\circ(\varphi-\varphi_j)) f\omega^n.
\end{eqnarray*}

The latter converges to $0$ as $j\to +\infty$, since $\varphi_j$ decreases to $\varphi$. Thus for $j$  big enough we have 
$H_j(0)\leq 1/(2C_1)$. 
It then follows from \cite[Remark 2.5]{EGZ09} that 
 $H_j(t)=0$ if $t\geq t_{\infty}$ where $t_{\infty}\leq C_2H_j(0)$ 
 and $C_2$ depends on $C_1$.  We then get
 $$
 \varphi\geq \lambda \varphi_j+(1-\lambda) 
 \psi -C_2 H_j(0).
 $$
 Now, letting $j\to+\infty$, we get
 $$
 \lim_{j\to+\infty}\inf_K (\varphi-\varphi_j) \geq  (\lambda-1)\sup_K \vert\psi\vert .
 $$
Finally, letting $\lambda\to 1$ we get the continuity of $\varphi$ on $K$. 
\end{proof}

\subsection{Laplacian estimate}
The following a priori estimate  generalizes \cite{Paun08}.

\begin{thm}\label{thm: C2 estimate}
Let $\mu$ be a positive measure on $X$ of the form $\mu=e^{\psi^+-\psi^-}\omega^n$ where $\psi^+,\psi^-$ are smooth on $X$. Let $\varphi\in \Cc^{\infty}(X)$  be such that $\sup_X\varphi = 0$ and
$$
(\omega+dd^c \varphi)^n=e^{\psi^+-\psi^-}\omega^n .
$$
Assume given a constant  $C>0$ such that
$$
dd^c \psi^{\pm}\geq -C\omega , \ \  \sup_X \psi^+\leq C .
$$
Assume also that the holomorphic bisectional curvature of $\omega$ is bounded from below by  $-C$. 
Then there exists $A>0$ depending on 
$C$ and  $\int_X e^{-2(4C+1)\f}\omega^n$ such that 
$$
0\leq n+\Delta_{\omega} \varphi \leq A e^{-2\psi^-}.
$$
\end{thm}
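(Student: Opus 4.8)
The plan is to establish the two inequalities separately. The lower bound $0\le n+\Delta_\omega\varphi$ is immediate: since $\varphi$ is smooth and solves $(\omega+dd^c\varphi)^n=e^{\psi^+-\psi^-}\omega^n$ with a strictly positive right-hand side, the form $\omega_\varphi:=\omega+dd^c\varphi$ is a genuine K\"ahler metric, so its trace $\tr_\omega\omega_\varphi=n+\Delta_\omega\varphi$ is positive. The whole difficulty lies in the upper bound, which I would attack by the maximum principle applied to a carefully chosen auxiliary function. I would work with $H:=\log\tr_\omega\omega_\varphi+2\psi^--\kappa\varphi$, where $\kappa=4C+1$. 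Since $\varphi\le 0$, one has $\sup_X(\log\tr_\omega\omega_\varphi+2\psi^-)\le\max_X H=H(x_0)$ at a maximum point $x_0$, so it suffices to bound $H(x_0)$ from above. The engine is the classical Aubin--Yau (Chern--Lu) inequality $\Delta_{\omega_\varphi}\log\tr_\omega\omega_\varphi\ge \Delta_\omega F/\tr_\omega\omega_\varphi-B\,\tr_{\omega_\varphi}\omega$, where $F=\psi^+-\psi^-$ and $-B$ (with $B$ controlled by $C$) bounds the holomorphic bisectional curvature of $\omega$ (cf.\ \cite{Paun08}).

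The key difficulty, and the place where $\psi^-$ must be handled with care, is that $\Delta_\omega\psi^-=\tr_\omega(dd^c\psi^-)$ has no upper bound in terms of $C$, so the term $-\Delta_\omega\psi^-/\tr_\omega\omega_\varphi$ cannot be controlled directly. I would circumvent this by a pointwise linear-algebra inequality: writing $\beta:=dd^c\psi^-+C\omega\ge 0$ and diagonalizing $\omega$ and $\omega_\varphi$ simultaneously, one gets $\tr_\omega\beta\le(\tr_\omega\omega_\varphi)(\tr_{\omega_\varphi}\beta)$, whence $-\Delta_\omega\psi^-/\tr_\omega\omega_\varphi\ge-\Delta_{\omega_\varphi}\psi^--C\,\tr_{\omega_\varphi}\omega$. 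Using also $\Delta_\omega\psi^+\ge-nC$ and $\Delta_{\omega_\varphi}\psi^-\ge-C\tr_{\omega_\varphi}\omega$, the two copies of $\psi^-$ in $H$ absorb the bad $-\Delta_{\omega_\varphi}\psi^-$ term; collecting everything yields, on the region $\tr_\omega\omega_\varphi\ge 1$ (which we may assume at $x_0$), a bound $\Delta_{\omega_\varphi}H\ge-c_1+(\kappa-2C-B)\,\tr_{\omega_\varphi}\omega$. With $\kappa=4C+1>2C+B$ the maximum principle $\Delta_{\omega_\varphi}H(x_0)\le 0$ forces $\tr_{\omega_\varphi}\omega(x_0)\le M_1$, a constant depending only on $C$ and $n$.

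It remains to turn this into a bound on $H(x_0)$. The elementary inequality $\tr_\omega\omega_\varphi\le c_n(\tr_{\omega_\varphi}\omega)^{n-1}\det(\omega_\varphi/\omega)$, together with $\det(\omega_\varphi/\omega)=e^{\psi^+-\psi^-}$ and $\psi^+\le C$, gives $\tr_\omega\omega_\varphi(x_0)\le M_2\,e^{-\psi^-(x_0)}$, that is $\log\tr_\omega\omega_\varphi(x_0)\le\log M_2-\psi^-(x_0)$, so $H(x_0)\le\log M_2+\psi^-(x_0)-\kappa\varphi(x_0)$. To dispose of the remaining $\psi^-(x_0)-\kappa\varphi(x_0)$ I would invoke the $\Cc^0$ estimate of Theorem \ref{thm: general uniform}: since $f=e^{\psi^+-\psi^-}\le e^{C}e^{-\psi^-}$, applying it with $a=1/\kappa=1/(4C+1)$ and $\phi=\psi^--C$ gives $\varphi\ge\tfrac1\kappa\psi^--A''$, hence $\psi^-(x_0)-\kappa\varphi(x_0)\le\kappa A''$, where $A''$ depends precisely on $\int_X e^{-2\varphi/a}\omega^n=\int_X e^{-2(4C+1)\varphi}\omega^n$.

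Combining everything, $\sup_X(\log\tr_\omega\omega_\varphi+2\psi^-)\le H(x_0)\le\log(M_2 e^{\kappa A''})$, which is exactly $0\le n+\Delta_\omega\varphi\le A\,e^{-2\psi^-}$ with $A$ depending on $C$ and on $\int_X e^{-2(4C+1)\varphi}\omega^n$. The main obstacle is genuinely the twofold coupling to $\psi^-$: first controlling the curvature/Laplacian term, which is resolved by the $\beta$-trick and by putting the coefficient $2$ on $\psi^-$ in $H$ so that the $\Delta_{\omega_\varphi}\psi^-$ contributions cancel; and second controlling the maximum-point value $\psi^-(x_0)-\kappa\varphi(x_0)$, which is resolved by feeding in the \emph{matched} $\Cc^0$ lower bound and is the origin of both the exponent $2(4C+1)$ in the integral and the factor $e^{-2\psi^-}$ on the right-hand side.
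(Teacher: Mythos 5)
Your proposal follows, in structure and in every main step, the paper's own proof: the same auxiliary function $H=\log \tr_{\omega}(\omega_\varphi)+2\psi^--(4C+1)\varphi$, the same Chern--Lu inequality (the paper cites Lemma 2.2 of \cite{CGP13}), the same device for the troublesome term $-\Delta_\omega\psi^-/\tr_\omega(\omega_\varphi)$ (your ``$\beta$-trick'' with $\beta=dd^c\psi^-+C\omega\geq 0$ is exactly the paper's inequality $\tr_\omega(\beta)\leq \tr_{\omega_\varphi}(\beta)\,\tr_\omega(\omega_\varphi)$), the same use of Lemma \ref{lemm1} at the maximum point, and the same matched application of the $\Cc^0$ estimate of Theorem \ref{thm: general uniform} with $a=1/(4C+1)$ to absorb $\psi^-(x_0)-(4C+1)\varphi(x_0)$.

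There is, however, one step that does not hold up as written: the restriction to the region $\{\tr_\omega(\omega_\varphi)\geq 1\}$, ``which we may assume at $x_0$.'' You cannot simply assume this. The maximum of $H$ could a priori occur where $\tr_\omega(\omega_\varphi)<1$: $H$ is large precisely where $\psi^-$ is large, and there the density $e^{\psi^+-\psi^-}$ is small, which forces the determinant of $\omega_\varphi$ to be small but in no way keeps its trace above $1$. If $\tr_\omega(\omega_\varphi)(x_0)<1$, your fallback bound is $H(x_0)\leq 2\psi^-(x_0)-(4C+1)\varphi(x_0)\leq \psi^-(x_0)+{\rm const}$ after the $\Cc^0$ estimate, and $\sup_X\psi^-$ is \emph{not} controlled by $C$ or by $\int_X e^{-2(4C+1)\varphi}\omega^n$, so the argument stalls in that case. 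The repair is one line, and it is what the paper does: instead of dropping the denominator on a region, bound $\Delta_\omega\psi^+/\tr_\omega(\omega_\varphi)\geq -nC/\tr_\omega(\omega_\varphi)\geq -C\,\tr_{\omega_\varphi}(\omega)$ using the trivial inequality $n\leq \tr_\omega(\omega_\varphi)\,\tr_{\omega_\varphi}(\omega)$ --- i.e.\ your own trace inequality applied with $\beta=\omega$ --- which is valid everywhere. The constant $-c_1$ then turns into one more multiple of $\tr_{\omega_\varphi}(\omega)$, the coefficient in front of $\tr_{\omega_\varphi}(\omega)$ becomes $(4C+1)-4C=1>0$ (taking $B=C$), and the maximum principle goes through with no case distinction; the rest of your argument is then exactly the paper's and is correct.
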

We follow the lines in Appendix B of \cite{BBEGZ12}. We recall the following result:
\begin{lem}\label{lemm1}
Let $\alpha, \beta$ be positive $(1,1)$-forms. Then
$$
n \left(\frac{\alpha^n}{\beta^n}\right)^{\frac{1}{n}}\leq \tr_{\beta}(\alpha)\leq n \left(\frac{\alpha^n}{\beta^n}\right)\cdot \left( \tr_{\alpha}(\beta)\right)^{n-1}. 
$$
\end{lem}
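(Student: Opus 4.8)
The plan is to reduce the statement to a pointwise inequality among the eigenvalues of $\alpha$ relative to $\beta$, since both $\tr_\beta(\alpha)$ and the ratio $\alpha^n/\beta^n$ are defined fiberwise. First I would fix a point $x\in X$ and simultaneously diagonalize the two positive Hermitian forms $\alpha,\beta$: one may choose complex coordinates at $x$ so that $\beta$ becomes the standard Euclidean form and $\alpha=\sum_{i=1}^n \lambda_i\, i\,dz_i\wedge d\bar z_i$ with $\lambda_i>0$ the eigenvalues of $\alpha$ with respect to $\beta$. Under this normalization the three relevant quantities become elementary symmetric expressions: $\tr_\beta(\alpha)=\sum_{i=1}^n \lambda_i$, $\ \alpha^n/\beta^n=\prod_{i=1}^n \lambda_i$, and $\tr_\alpha(\beta)=\sum_{i=1}^n \lambda_i^{-1}$. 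Thus the whole lemma is equivalent to a pair of scalar inequalities for positive reals $\lambda_1,\dots,\lambda_n$.

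For the left-hand inequality I would simply invoke the arithmetic-geometric mean inequality, which gives directly
$$
n\Bigl(\prod_{i=1}^n \lambda_i\Bigr)^{1/n}\leq \sum_{i=1}^n \lambda_i,
$$
that is, $n(\alpha^n/\beta^n)^{1/n}\leq \tr_\beta(\alpha)$.

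For the right-hand inequality the idea is to bound each eigenvalue $\lambda_i$ separately. Expanding $\bigl(\sum_{k=1}^n \lambda_k^{-1}\bigr)^{n-1}$ by the multinomial formula, all terms are positive, and among them appears the monomial $\prod_{k\neq i}\lambda_k^{-1}$ (a product of the $n-1$ distinct factors $\lambda_k^{-1}$ with $k\neq i$), with coefficient at least $1$. Hence
$$
\Bigl(\sum_{k=1}^n \lambda_k^{-1}\Bigr)^{n-1}\geq \prod_{k\neq i}\lambda_k^{-1},
$$
and multiplying by $\prod_{j=1}^n \lambda_j$ yields $\lambda_i\leq \bigl(\prod_{j=1}^n \lambda_j\bigr)\bigl(\sum_{k=1}^n \lambda_k^{-1}\bigr)^{n-1}$ for every $i$. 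Summing over $i=1,\dots,n$ gives
$$
\sum_{i=1}^n \lambda_i\leq n\Bigl(\prod_{j=1}^n \lambda_j\Bigr)\Bigl(\sum_{k=1}^n \lambda_k^{-1}\Bigr)^{n-1},
$$
which is exactly $\tr_\beta(\alpha)\leq n(\alpha^n/\beta^n)\,(\tr_\alpha(\beta))^{n-1}$.

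This argument is entirely elementary linear algebra, so there is no serious obstacle; the only point requiring a little care is the right-hand bound, where one must correctly identify the monomial $\prod_{k\neq i}\lambda_k^{-1}$ inside the expansion and note that dropping the remaining positive terms preserves the inequality. The simultaneous diagonalization and the coordinate-independence of $\tr_\beta(\alpha)$ and $\alpha^n/\beta^n$ make the pointwise reduction legitimate, so verifying the inequality at a single (arbitrary) point suffices.
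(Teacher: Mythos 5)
Your proof is correct. The paper itself gives no proof of this lemma: it is simply recalled (the surrounding text points to Appendix B of \cite{BBEGZ12}), so there is nothing to compare line by line; your argument is exactly the standard one such references have in mind. The reduction by simultaneous diagonalization is legitimate since both forms are positive definite at the point considered, and the identifications $\tr_\beta(\alpha)=\sum_i\lambda_i$, $\alpha^n/\beta^n=\prod_i\lambda_i$, $\tr_\alpha(\beta)=\sum_i\lambda_i^{-1}$ are right, so the lemma does reduce to the two scalar inequalities you state. The left one is AM--GM, and your right-hand bound is sound: the monomial $\prod_{k\neq i}\lambda_k^{-1}$ indeed occurs (with coefficient $(n-1)!\geq 1$) in the multinomial expansion of $\bigl(\sum_k\lambda_k^{-1}\bigr)^{n-1}$. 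One small streamlining: the multinomial expansion is unnecessary, since each of the $n-1$ factors $\lambda_k^{-1}$, $k\neq i$, is individually at most $\sum_k\lambda_k^{-1}$, whence $\prod_{k\neq i}\lambda_k^{-1}\leq\bigl(\sum_k\lambda_k^{-1}\bigr)^{n-1}$ immediately; multiplying by $\prod_j\lambda_j$ gives $\lambda_i\leq\bigl(\prod_j\lambda_j\bigr)\bigl(\sum_k\lambda_k^{-1}\bigr)^{n-1}$, and summing over $i$ finishes as you say.
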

\begin{proof}[Proof of Theorem \ref{thm: C2 estimate}]
Set $\omega_{\varphi}:=\omega+dd^c\varphi$. Since the holomorphic bisectional curvature of $\omega$ is bounded from below by $-C$, it 
follows from  Lemma 2.2 in \cite{CGP13} that 
\begin{equation}\label{eq: C2 estimate 1}
\Delta_{\omega_\varphi}\log \tr_{\omega}(\omega_\varphi)\geq \frac{\tr_\omega (dd^c \p^+ -dd^c \p^-)}{\tr_\omega(\omega_\varphi)}-
C\tr_{\omega_\varphi}(\omega).
\end{equation}
Since $dd^c \p^+\geq -C\omega$, using the trivial inequality 
$n\leq \tr_{\omega}(\omega_\f)\tr_{\omega_\f}(\omega)$ 
 we thus get from (\ref{eq: C2 estimate 1}) that
\begin{eqnarray}\label{eq: C2 estimate 2}
\Delta_{\omega_\varphi}\log \tr_{\omega}(\omega_\varphi)&\geq & -\frac{\tr_\omega (C\omega+dd^c\psi^-)}{\tr_\omega(\omega_\varphi)}-C \tr_{\omega_\varphi}(\omega) \nonumber \\
&\geq & -2C\tr_{\omega_\varphi}(\omega)-\frac{\Delta\psi^-}{\tr_\omega(\omega_\varphi)}\, .
\end{eqnarray}
By assumption we have $0\leq C\omega +dd^c\psi^-\leq \tr_{\omega_\f}
(C\omega+dd^c \psi^-)\omega_\f$. Applying $\tr_{\omega}$ to the 
previous inequality  yields 
$$
C n+\Delta\psi^-\leq (C \tr_{\omega_{\f}}(\omega)+\Delta_{\omega_\f}\psi^-)\tr_\omega (\omega_\f),
$$ 
and hence
$$
-\Delta \psi^- \geq -(C \tr_{\omega_{\f}}(\omega)+\Delta_{\omega_\f}\psi^-)\tr_\omega (\omega_\f).
$$
Thus, plugging this into (\ref{eq: C2 estimate 2}) we obtain 
\begin{eqnarray}\label{eq: C2 estimate 3}
\Delta_{\omega_\varphi}\log \tr_{\omega}(\omega_\varphi)\geq -3C \tr_{\omega_\varphi}(\omega)-\Delta_{\omega_\f}\psi^-.
\end{eqnarray}
We want now to apply the maximum principle to the function 
$$
H:= \log \tr_{\omega}(\omega_\f)+ 2\psi^- -(1+4C)\f,
$$
Let $x_0\in X$ be such that
$H$ achieves its maximum on $X$ at  $x_0$. Then at $x_0$ we get
\begin{eqnarray*}
0\geq \Delta_{\omega_\varphi}H &\geq & \tr_{\omega_\varphi}(\omega)-n(1+4C) .
\end{eqnarray*}
Furthermore, by Lemma \ref{lemm1} we get
$$
\tr_{\omega}(\omega_\f)(x_0)\leq n e^{\psi^+-\psi^-}(x_0) \left(\tr_{\omega_\varphi}(\omega)\right)^{n-1}(x_0)\leq A_1 e^{\psi^+-\psi^-}(x_0) ,
$$ 
and hence, since $\sup_X \psi^+\leq C$,
$$
\log \tr_{\omega}(\omega_\f)(x_0)\leq \log A_1 +\psi^+(x_0)-\psi^-(x_0)\leq A_2-\psi^-(x_0)\, .
$$
It follows that
$$
H(x)\leq H(x_0)\leq A_3 +\p^-(x_0)- (1+4C) \f(x_0).
$$ 
By assumption and the $\Cc^0$ estimate in Theorem \ref{thm: general uniform} we have $\f\geq a \psi^--A_4$, where $a=1/(4C+1)$ and $A_4$ depends on $C$ and $\int_X e^{-2\f/a}\omega^n$.  Thus 
$$
\log \tr_{\omega}(\omega_\f)\leq A_5-2\psi^-.
$$
We finally infer as desired 
$$
\tr_{\omega}(\omega_{\f})\leq A_6 e^{-2\psi^- }.
$$

\end{proof}
We are now ready to prove Theorem \ref{thm: main general}.

\subsection{Proof of Theorem \ref{thm: main general}}\label{sect: proof main general}
Let $\f\in \EcX$ be the unique normalized solution to 
$$
(\omega+dd^c \f)^n =f\omega^n.
$$
By assumption we can write $\log f= \psi^+-\psi^-$, where 
$\psi^{\pm}$ are quasi psh functions on $X$, $\psi^-$ is 
locally bounded on $X\setminus D$, and there is 
a uniform constant $C>0$ such that
$$
dd^c \psi^{\pm}\geq -C\omega, \ \sup_X \psi^+\leq C.
$$ 
We now approximate $\psi^{\pm}$ by using Demailly's regularization operator $\rho_{\varepsilon}$. We recall the construction:
if $u$ is a quasi-psh function on $X$ and $\varepsilon>0$ we set
$$
\rho_{\varepsilon}(u)(z):= \frac{1}{\varepsilon^{2n}}
\int_{\zeta\in T_{X,z}} u({\rm exph}_z(\zeta)) \chi\left(\vert\zeta
\vert^2/\varepsilon^2\right)d\lambda(\zeta).
$$
Here $\chi\in \Cc^{\infty}(\R)$ is a cut-off function supported in $[-1,1]$, $\int_{\R} \chi(t) dt =1$,  and 
$$
{\rm exph}: TX \rightarrow X ,\ \  \zeta \mapsto {\rm exph}_z(\zeta)
$$
is the formal holomorphic part of the Taylor expansion of the exponential map defined by  the metric $\omega$. For more details, see \cite{Dem92}. Observe that by Jensen's inequality,
$\rho_{\vep}(e^u)\geq e^{\rho_{\varepsilon}(u)}$.
Applying this smoothing regularization 
to $\psi^{\pm}$ we get, for $\varepsilon>0$ small enough,
$$
dd^c \rho_{\varepsilon}(\psi^{\pm})\geq -C_1\omega, \ \ e^{\rho_{\vep}(\p^+-\p^-)} \leq e^{-\rho_{\vep}(\p^-)+C_1},
$$  
where $C_1$ depends on $C$ and the Lelong numbers of the currents
$C\omega+dd^c \psi^{\pm}$.  
Now, for each $\varepsilon>0$, let $\f_{\vep}\in
\Cc^{\infty}(X)$ be the unique normalized solution to
$$
(\omega+dd^c \f_{\vep})^n= c_{\vep} 
e^{\rho_{\vep}(\psi^+)-\rho_{\vep}(\psi^-)}\omega^n=
f_{\vep} \omega^n,
$$
where $c_{\vep}>0$ is a normalization constant. 
Since $e^{\rho_{\varepsilon}(\log f)}$ converges point-wise to $f$ on 
$X$ and since $e^{\rho_{\varepsilon}(\log f)} \leq 
\rho_{\varepsilon}(e^{\log f})$, by the {\it General Lebesgue 
Dominated Convergence Theorem} we see that 
$e^{\rho_{\varepsilon}(\log f)}$ converges to 
$f$ in $L^1(X)$ as $\varepsilon\to 0$. 
This implies that $c_{\varepsilon}$ converges 
to $1$ as $\varepsilon \to 0$. Then we can assume that 
$c_{\vep}\leq 2$. Thus we get the following
uniform control 
$$
f_{\vep} \leq e^{-\rho_{\vep}(\p^-) + C_2}.
$$
By Lemma \ref{lem: E stable} below we know that
$\f_{\vep}$ converges to $\f$ in $L^1(X)$. Thus the set 
$$
\Uc:= \{\f_{\vep}\ \setdef\ \vep>0\} \cup \{\f\}
$$ 
is compact in $L^1(X)$. Then it follows from the uniform Skoda integrability theorem (Lemma \ref{lem: Zer01} below) that for any $A>0$ we have
$$
\sup_{\vep>0}\int_X e^{-A\f_{\vep}}\omega^n <+\infty. 
$$
Thus, we can apply Theorem \ref{thm: C2 estimate} to find 
$C_3>0$ under control  such that
$$
\Delta_{\omega} \varphi_{\varepsilon} \leq C_3 e^{-2\p^-}.
$$

Fix  a compact $K\Subset X\setminus D$, $k\geq 2$ and $\beta\in(0,1)$. Now since  $0<f\in \Cc^{\infty}(X\setminus D)$ we have uniform controls on the derivatives of all orders of $\log f_{\varepsilon}$ on $K$. Using the standard Evans-Krylov method and Schauder estimates we then obtain
$$
\Vert\varphi_{\varepsilon}\Vert_{\Cc^{k,\beta}(K)} \leq C_{K,k,\beta} .
$$
This explains the smoothness of $\f$ on $X\setminus D$.

\begin{lem}\label{lem: E stable}
Let $(X,\omega)$ be a compact K\"ahler manifold of dimension $n$. Let
$(f_j)$ be a sequence of non-negative functions on $X$ such that 
$\int_X f_j\omega^n=\int_X \omega^n$. Assume that $f_j$ converges 
in $L^1(X)$ and point-wise to $f$. For each $j$, let $\varphi_j\in \EcX$ be the unique normalized solution to $\MA(\f_j)=f_j\omega^n$. 
Then $\f_j$ converges in $L^1(X)$ to $\f\in \EcX$ the unique normalized solution to $\MA(\f)=f\omega^n$.  
\end{lem}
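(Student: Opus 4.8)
The plan is to combine the $L^1$-compactness of normalized $\omega$-psh functions with the uniqueness of solutions in $\EcX$, reducing the statement to a convergence assertion for the Monge--Amp\`ere measures. Throughout, the normalization is $\sup_X \f_j = 0$. Since the set $\{v \in \psh(X,\omega) : \sup_X v = 0\}$ is compact in $L^1(X)$, the sequence $(\f_j)$ is relatively compact, and it suffices to show that every $L^1$-convergent subsequence has limit $\f$: in a relatively compact sequence, if all convergent subsequences share the same limit, the whole sequence converges to it. So I fix a subsequence $\f_{j_k} \to u$ in $L^1$, and after a further extraction assume $\f_{j_k} \to u$ almost everywhere. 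The usc-regularized limit $u$ is $\omega$-psh, and Hartogs' lemma gives $\sup_X u = 0$ (the inequality $0 = \limsup_k \sup_X \f_{j_k} \le \sup_X u$ together with $\f_{j_k}\le 0$).

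It then remains to prove that $u \in \EcX$ and $\MA(u) = f\omega^n$. Once this is known, $u$ is a normalized solution of the limiting equation, so $u = \f$ by the uniqueness part of \cite{GZ07}, and the reduction above yields $\f_j \to \f$ in $L^1(X)$. Since $f_{j_k} \to f$ in $L^1$, we already have the weak convergence of measures $\MA(\f_{j_k}) = f_{j_k}\omega^n \weak f\omega^n$; the real task is to identify this weak limit with $\MA(u)$, and simultaneously to verify that no mass is lost, i.e.\ $\int_X \MA(u) = \int_X \omega^n$, which is what places $u$ in $\EcX$.

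The main obstacle is exactly this convergence of the \emph{nonpluripolar} Monge--Amp\`ere measures: the operator $v \mapsto \MA(v)$ is not continuous under $L^1$-convergence of unbounded potentials, and a priori mass can escape by concentrating on the sets where $\f_{j_k} \to -\infty$. The mechanism that rules this out is the uniform integrability of the densities. On the one hand, an $L^1$-convergent sequence is uniformly integrable, so $(f_{j_k})$ is uniformly integrable with respect to $\omega^n$. On the other hand, the uniform Skoda integrability theorem applied to the $L^1$-compact family $(\f_{j_k})$ gives $\sup_k \int_X e^{-\alpha \f_{j_k}}\omega^n < +\infty$ for some $\alpha>0$, whence by Chebyshev $\vol(\{\f_{j_k} < -t\}) \le C e^{-\alpha t}$ uniformly in $k$. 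Combining the two facts yields the uniform tail control
\[
\sup_k \int_{\{\f_{j_k} < -t\}} \MA(\f_{j_k}) \;=\; \sup_k \int_{\{\f_{j_k} < -t\}} f_{j_k}\,\omega^n \;\longrightarrow\; 0 \qquad (t \to +\infty).
\]
This uniform non-concentration is precisely what is needed to pass to the limit on the truncations $\max(\f_{j_k},-t)$ and, via the monotone convergence theorem for the Monge--Amp\`ere operator on $\EcX$ (applied to the decreasing envelopes $(\sup_{l\ge k}\f_{j_l})^*$) together with the comparison principle of \cite{BEGZ10}, to conclude both that $u$ has full mass, so $u \in \EcX$, and that $\MA(\f_{j_k}) \weak \MA(u)$. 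Identifying the two weak limits gives $\MA(u) = f\omega^n$.

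With $u \in \EcX$ a normalized solution of $\MA(u) = f\omega^n$, the uniqueness statement of \cite{GZ07} forces $u = \f$; as this holds for every $L^1$-convergent subsequence, relative compactness upgrades it to convergence of the full sequence. I expect the control of the possible loss of mass and the resulting weak convergence of the Monge--Amp\`ere measures to be the delicate heart of the proof, the remaining steps being soft compactness and uniqueness.
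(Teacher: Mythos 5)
Your framework (the normalization, $L^1$-compactness of normalized $\omega$-psh functions, Hartogs' lemma, reduction to identifying subsequential limits, and the final appeal to uniqueness) coincides with the paper's, and you correctly isolate the real issue: proving that the limit $u$ has full Monge--Amp\`ere mass and that $\MA(u)=f\omega^n$. But at exactly this point your argument has a genuine gap. The ``monotone convergence theorem for the Monge--Amp\`ere operator on $\EcX$'' that you invoke for the envelopes $\psi_k:=(\sup_{l\ge k}\f_{j_l})^*\downarrow u$ is only valid when the decreasing limit is itself known to lie in $\EcX$; using it to \emph{conclude} $u\in\EcX$ is circular. Moreover, the uniform tail control you derive, while essentially correct, is never actually put to work: to rule out mass loss along the scheme you describe one would need an \emph{upper} bound for the mass of $\MA(\max(\psi_k,-t))$ or of $\MA(\max(u,-t))$ on the sets where $u$ is very negative, whereas the tools you cite (the comparison principle of \cite{BEGZ10}, the maximum inequality for envelopes) naturally produce \emph{lower} bounds for Monge--Amp\`ere measures of maxima; nothing in your sketch converts non-concentration of $\MA(\f_{j_k})=f_{j_k}\omega^n$ on $\{\f_{j_k}<-t\}$ into such an upper bound. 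There is also a smaller circularity: the paper's uniform Skoda lemma applies to compact families \emph{in} $\EcX$, and the $L^1$-closure of $(\f_{j_k})$ contains $u$, which is not yet known to belong to $\EcX$; one would need instead a uniform $\alpha$-invariant--type estimate valid for all normalized $\omega$-psh functions.

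The missing idea, which is the crux of the paper's proof, is to max the envelopes with the known solution of the limit equation: set $u_k:=\max(\psi_k,\f-1)$, where $\f\in\EcX$ solves $\MA(\f)=f\omega^n$ (its existence is already guaranteed by \cite{GZ07}, independently of the limiting process). Then $u_k$ decreases to $\max(\psi,\f-1)\ge\f-1$, which lies in $\EcX$ \emph{automatically}, so continuity of $\MA$ along decreasing sequences applies legitimately; combined with the inequality $\MA(u_k)\ge\min\bigl(f,\inf_{l\ge k}f_{j_l}\bigr)\,\omega^n$ (a consequence of the comparison principle) this yields $\MA(\max(\psi,\f-1))\ge f\omega^n$, hence equality by comparing total masses, and uniqueness in $\EcX$ (\cite{Di09}) forces $\max(\psi,\f-1)=\f$, therefore $\psi=\f$. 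Note that this route requires neither uniform integrability of the densities nor any Skoda-type estimate; alternatively one can break your circularity by working with the bounded truncations $\max(\psi_k,-t)$ and Bedford--Taylor convergence to establish $\MA(u)\ge f\omega^n$ directly, but some such device is indispensable, and your proposal does not supply one.
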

\begin{proof}
We can assume that $\f_j$ converges in $L^1(X)$ to $\psi\in \psh(X,\omega)$. It follows from the Hartogs lemma that $\sup_X \psi=0$. For each $j\in \N$ set 
$$
\p_j:= \left(\sup_{k\geq j}\f_j\right)^*\ {\rm and}\ u_j:= \max(\psi_j, \f-1).
$$
Then we see that $\p_j\downarrow \psi$ and $u_j\downarrow u:=\max(\p,\f-1)\in \EcX$. We also have that $\sup_X u=0$. It follows from the comparison principle that 
$$
\MA(u_j) \geq \min\left(f,\inf_{k\geq j} f_k\right)\omega^n= g_j \omega^n.
$$ 
By the continuity of the Monge-Amp\`ere operator along decreasing sequences in $\EcX$ we get 
$$
\MA(u)= \lim_{j\to+\infty} \MA(u_j) \geq  \lim_{j\to+\infty} g_j \omega^n = f\omega^n.
$$ 
Then the equality holds since they have the same total mass. Finally, by the uniqueness result in the class $\EcX$ (see \cite{Di09}) we deduce that $u=\f$, which implies that $\p=\f$. The proof is thus complete.
\end{proof}

By \cite{GZ07}, functions in $\Ec(X,\omega)$ have zero Lelong number at every point on $X$. Thus the following lemma is a direct consequence of  the uniform Skoda integrability theorem due to Zeriahi \cite{Zer01}:
\begin{lem}\label{lem: Zer01}
Let $\Uc$ be a compact family of functions in $\Ec(X,\omega)$. Then for each $C_1>0$ there exists $C_2$ depending on $C_1$ and $\Uc$ such that
$$
\int_X e^{-C_1 \phi} \omega^n \leq C_2,
 \ \forall \phi\in \Uc.
$$
\end{lem}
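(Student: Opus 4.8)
The plan is to reduce the statement to the two facts quoted just above it and then invoke the uniform integrability theorem of Zeriahi, so that no new estimate is actually required. \emph{First}, I would record the input on singularities: by \cite[Corollary 1.8]{GZ07}, already recalled in Section \ref{sect: basic properties}, every $\phi\in\EcX$ satisfies $\nu(\phi,x)=0$ for all $x\in X$. Consequently the family $\Uc$, being contained in $\Ec(X,\omega)$, has uniformly vanishing Lelong numbers, that is
$$
\nu(\Uc):=\sup_{\phi\in\Uc}\ \sup_{x\in X}\nu(\phi,x)=0 .
$$
In particular $\nu(\Uc)$ lies below any positive threshold whatsoever, which is exactly the hypothesis that lets one take the Skoda exponent $C_1$ arbitrarily large.

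\emph{Second}, I would apply Zeriahi's uniform version of Skoda's integrability theorem \cite{Zer01}. Its content is that for a family of $\omega$-psh functions which is compact in $L^1(X)$, the Skoda integrability of $e^{-C_1\phi}$ can be made uniform over the whole family, subject only to a smallness condition on the product $C_1\,\nu(\Uc)$. Here $\nu(\Uc)=0$, so this constraint is vacuous and every $C_1>0$ is admissible: one obtains a single constant $C_2$, depending only on $C_1$ and on the $L^1$-compact set $\Uc$, such that $\int_X e^{-C_1\phi}\omega^n\le C_2$ for all $\phi\in\Uc$. This is precisely the assertion of the lemma.

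\emph{The main point to be careful about} is the uniformity, and this is exactly where the compactness hypothesis on $\Uc$ does all the work. For a \emph{single} $\phi\in\EcX$ the classical Skoda theorem, combined with $\nu(\phi,\cdot)\equiv 0$ and a finite covering of $X$ by coordinate charts, already yields $\int_X e^{-C_1\phi}\omega^n<+\infty$ for every $C_1>0$; but this by itself gives no control as $\phi$ ranges over $\Uc$. The strengthening to a bound $C_2$ valid simultaneously for every member of the family is precisely what Zeriahi proves by coupling $L^1$-compactness with the uniform Lelong-number bound, so I would simply quote that result. With these two ingredients in place the lemma follows immediately, with no further computation.
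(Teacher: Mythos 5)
Your proposal is correct and follows exactly the route the paper takes: the paper's own proof consists of citing \cite[Corollary 1.8]{GZ07} for the vanishing of Lelong numbers of functions in $\Ec(X,\omega)$ and then invoking Zeriahi's uniform Skoda integrability theorem \cite{Zer01}, which applies to the $L^1$-compact family $\Uc$ with any exponent $C_1>0$ precisely because the uniform Lelong-number bound is zero. Your write-up merely spells out the uniformity point that the paper leaves implicit; there is no substantive difference.
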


\section{Asymptotic behavior near the divisor}
In Theorem \ref{thm: general uniform} we have given a very general 
$\Cc^0$ estimate. We only assumed that the density $f$ is bounded by $e^{-\phi}$ for some quasi plurisubharmonic function $\phi$, 
and there is no regularity assumption on $D$. 
It is therefore natural to investigate the asymptotic behavior of the solution near $D$ 
when we have more information about  $D$ and about the behavior of $f$ near $D$.  

Let $X$ be a compact K\"{a}hler manifold of dimension $n$ and let $\omega$ be a K\"ahler form on $X$. Let $D= \sum_{j=1}^{N} D_j$ be a simple normal crossing divisor on $X$. Here "simple normal crossing" means that around each intersection point of $k$ components $D_{j_1},...,D_{j_k}$ ($k\leq N$), we can find complex coordinates $z_1,...,z_n$ such that 
for each $l=1,...,k$ the hypersurface $D_{j_l}$ is locally given by $z_l=0$. For each $j$, let $L_j$ be the holomorphic line bundle defined by $D_j$. Let $s_j$ be a holomorphic section of $L_j$ defining $D_j$, i.e 
$D_j=\{s_j=0\}$. We fix  a hermitian metric $h_j$ on $L_j$ such that 
$\vert s_j\vert:= \vert s_j\vert_{h_j}\leq 1/e$. 

We say that $f$ satisfies Condition $\Sc(B,\alpha)$ for some $B>0,\alpha>0$ if
\begin{equation}\label{eq: cond f log bis}
f\leq \frac{B}{\prod_{j=1}^N \vert s_j\vert^2 (-\log \vert s_j\vert)^{1+\alpha}}.
\end{equation}

\subsection{Volume-capacity domination}\label{sect: vol cap}

\begin{lem}\label{lem: vol cap}
Assume that $f$ satisfies  (\ref{eq: cond f log bis}) for some 
$B>0,\alpha>0$. Then for each $0<\gamma<\alpha$ we can find $A>0$ which only depends on $B,\alpha,\gamma,\omega$ such that
$$
\vol_f(E):= \int_E f\omega^n \leq A\, \Capa_{\omega}(E)^{\gamma}, \;\forall E\subset X,
$$
where $\Capa_{\omega}$ is the Monge-Amp\`ere capacity introduced in \cite{Kol03}, \cite{GZ05}.
\end{lem}

Before giving the proof of the lemma, let us recall the definition and basic facts about Cegrell's classes. We refer the reader to \cite{Ce98, Ce04} for more details. 

Let $\Omega$ be a bounded hyperconvex domain in $\C^n$. 
The class $\Ec_0(\Omega)$  consists of bounded psh functions which vanish on the boundary and have finite total mass.

We say that $u\in\Ec^p(\Omega), p>0$  if there exists a sequence $(u_j)\subset \Ec_0(\Omega)$ decreasing to $u$ such that 
$$
\sup_j \int_{\Omega} (-u_j)^p(dd^c u_j)^n<+\infty.
$$

A function $u$ belongs to $\Fc(\Omega)$  if there exists a sequence $(u_j)\subset \Ec_0(\Omega)$ decreasing to $u$ such that 
$$
\sup_j \int_{\Omega} (dd^c u_j)^n<+\infty.
$$
We recall the local Monge-Amp\`ere capacity introduced in \cite{BT82}:
for any Borel subset $E\subset \Omega$, we define 
$$
\CapBT(E,\Omega):= \sup\left\{\int_E (dd^c u)^n\ \setdef\ u\in \psh(\Omega)\ , \ -1\leq u\leq 0\right\}.
$$
The relative extremal function of $E$ with respect to $\Omega$ is 
$$
u_{E,\Omega}:= \sup\left\{u \in \psh(\Omega)\ \setdef\ u\leq 0\ {\rm on}\ \Omega\ ,\ u\leq -1\ {\rm on}\ E\right\}.
$$
\begin{proof}[Proof of Lemma \ref{lem: vol cap}]

It follows from \cite{Kol03} that $\Capa_{\omega}$ is comparable to the local capacity $\CapBT(\cdot, \Omega)$, where $\Omega$ is an open subset contained in a local chart. By considering $E$ a small subset contained in a local chart we reduce the problem to showing that
\begin{equation}\label{eq: poincare 1}
\vol_g(E)\leq A_1 \CapBT(E,\D^n)^{\alpha} , \forall E\Subset \D^n_{\delta}\Subset \D^n ,
\end{equation}
where $\D^n$ is the unit polydisk in $\C^n$, $\delta>0$ small enough and fixed, and 
$$
g(z)= g(z_1,...,z_n):=\frac{1}{\prod_{j=1}^k\vert z_j\vert^2(1-\log \vert z_j\vert)^{1+\alpha}}, k\leq n.
$$
We prove (\ref{eq: poincare 1}) by induction using the ideas in \cite{ACK}. We start with the case $n=1$.

Set $E_r:=E \cap \partial\D_r$, for any $r\in[0,t]$. Define now $\tilde{E}:= \{r\in [0,t] \,|\, E_r \neq \emptyset \}$ and denote by $l(\tilde{E})$ the length of $\tilde{E}$. Since the function $r\mapsto \frac{1}{r(1-\log r)^{1+\alpha}}$ is non-increasing when $r$ is small,  we obtain
\begin{eqnarray*}
\int_E g(z) dV(z) &=&  \int_0^{2\pi} \int_{\tilde{E}} \frac{dr d\theta}{r(1-\log r)^{1+\alpha}} \\
 &\leq & 2\pi\int_0^{\ell({\tilde{E}})} \frac{dr}{r(1-\log r)^{1+\alpha}} \\
&\leq & \frac{C_1}{(-\log l(\tilde{E}))^{\alpha}}\\
&\leq & C_2 \,\left[\CapBT (E,\D)\right]^{\alpha},  
\end{eqnarray*}
where the last inequality follows from \cite[p.1336]{Kol94}.

Assume that the result holds for $n-1$. Let us prove it for $n$. 
Without loss of generality we can assume that $E$ is compact in $\D^n$. We can also assume that $k=n$ (if $k<n$ the situation is much easier).
Set $h=h_{E,\D^n}^*$ the relative extremal function of $E$. 
Consider
$$
g_n(w):=\frac{1}{\vert w\vert^2 (1-\log \vert w\vert)^{1+\alpha}} \ ,\
g_{n-1}(z):=\frac{1}{\prod_{j=1}^{n-1}\vert z_j\vert^2 (1-\log \vert z_j\vert)^{1+\alpha}}.
$$
For each $w\in \D$ set
$$
E_{w}=\{z\in \mathbb{D}^{n-1}\ \vert \ h(z,w)\leq -1\} \ \ {\rm and}\ \ h_w=h(\cdot, w).
$$
By induction hypothesis  we get 
$$
\vol_g (E) =\int_{\D} \vol_{g_{n-1}}(E_w) g_n(w)dV_2(w) \leq A_1\int_{\D} \left[\CapBT (E_w, \D^{n-1})\right]^{\gamma}g_n(w)dV(w).
$$

Fix now $w\in \D$ and denote by $u= h_{E_w,\D}^*$ the relative extremal function of $E_w$. Since $h\in \mathcal{F}(\D^n)$ it follows from \cite[Theorem 3.1]{ACK} that $h_w\in \Ec^1(\D^{n-1})$. We also have  $h_w\leq u$ and $h_w= -1$ on $E_w.$ Using integration by parts we get
\begin{eqnarray*}
\CapBT (E_w,\D^{n-1}) \leq  \int_{\D^{n-1}} (-h_w) (dd^c u)^{n-1} \leq  \int_{\D^{n-1}} (-h_w) (dd^c h_w)^{n-1}=: -\f(w).
\end{eqnarray*}
By \cite[Theorem 3.1]{ACK} we know that  $\varphi\in \Fc(\D)$. Moreover, we also have $\varphi \geq -A_0$ for some universal constant $A_0$ (here $A_0$ depends on $\delta$). Indeed, let $v$ be the relative extremal function of $\D_{\delta}^n$ with respect to $\D^n$. Since $h\geq v$, it is easy to see that for each $w\in \D$, $h_w \geq v_w$. From this we get a uniform lower bound for $\varphi$. Since $E$ is compact in $\D^n$ we also get 
$$
\mu=\int_{\D} dd^c \f = \int_{\D^n} (dd^c h)^n = \CapBT (E,\D^n).
$$
Thus, using the previous part (when $n=1$) we  obtain
\begin{eqnarray*}
\vol_g(E) & \leq & A_1 \int_{\D} (-\f(w))^{\gamma} g_n(w) dV_2(w) \\
& = & A_2 \int_0^{A_0} t^{\gamma-1} \vol_{g_n}(\f<-t) dt\\
& \leq & A_3 \int_0^{A_0}  t^{\gamma-\beta_1-1}\mu^{\beta_1} dt\\
& = & A_4 \left[\CapBT(E,\D^n)\right]^{\beta_1}.
\end{eqnarray*}
Here, we choose $\beta_1<\gamma$ so that the integrals converge. In the above we have used the fact that 
$$
\CapBT(v<-t) \leq \frac{1}{t}\int_{\D} dd^c v, \ \forall v\in 
\Fc(\D), \ \forall t>0.
$$
Since $\beta_1$ can be chosen arbitrarily near $\gamma$ (and the constant $A_4$ will increase), the result follows.
\end{proof}

When $\alpha=1$ we get the following  estimate.

\begin{lem}\label{lem: refined vol-cap domination alpha=1}
Let $\mu=f\omega^n$, $f=\frac{1}{\prod_{j=1}^N \vert s_j\vert^2 (-\log \vert s_j\vert)^{2}}$. Then there exists $A>0$  such that  for every Borel subset $E\subset X$ we have
\begin{equation}\label{eq: refined vol-cap domination}
\mu(E) \leq A\cdot  \left[\eta+(-\log \eta)^n\Capa_{\omega}(E)\right], \ \forall \eta\in (0,1/e).
\end{equation}
\end{lem}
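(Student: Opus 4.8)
The plan is to prove the refined volume-capacity estimate for $\alpha=1$ by combining the general domination from Lemma \ref{lem: vol cap} (available here with any exponent $\gamma<\alpha=1$, so $\gamma$ arbitrarily close to $1$) with a more careful \emph{truncation} of the density near the divisor, the truncation level being governed by the free parameter $\eta$. The key structural observation is that the density $f = \frac{1}{\prod_{j=1}^N |s_j|^2(-\log|s_j|)^2}$ concentrates its mass only on a small neighborhood of $D$, and on the complement of such a neighborhood it is bounded; the parameter $\eta$ lets us trade off the contribution of the ``bad'' region near $D$ against the capacity term.

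**The main steps.**

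First I would reduce, exactly as in the proof of Lemma \ref{lem: vol cap}, to a local statement on a polydisk: since $\Capa_\omega$ is comparable to $\CapBT(\cdot,\Omega)$ on local charts (by \cite{Kol03}), it suffices to prove the analogue of (\ref{eq: refined vol-cap domination}) for the local density $g_n(w)=\frac{1}{|w|^2(1-\log|w|)^2}$ in one variable, and then to bootstrap to $n$ variables by the same inductive scheme used in Lemma \ref{lem: vol cap}. For the one-dimensional core estimate, given $\eta\in(0,1/e)$, I would split the domain into the region $\{|w|\le \eta\}$ near the origin and its complement. On $\{|w|>\eta\}$ the density $g_n$ is bounded by $\frac{1}{\eta^2(1-\log\eta)^2}$, but more usefully one integrates the mass of the full annular region directly:
\begin{equation*}
\int_{\{|w|\le\eta\}} g_n(w)\, dV_2(w) = 2\pi\int_0^{\eta}\frac{dr}{r(1-\log r)^2} = \frac{2\pi}{1-\log\eta} \le C\,\eta
\end{equation*}
is \emph{not} quite what is wanted (this decays like $1/(-\log\eta)$, not like $\eta$), so in fact I expect the correct split is to bound the near-$D$ contribution of $\mu(E)$ by the total near-$D$ mass, which gives a term controlled by $\eta$ after absorbing the logarithmic factor, while on the far region $\{|w|>\eta\}$ one uses the genuine domination $\mu(E\cap\{|w|>\eta\}) \le \frac{C}{(1-\log\eta)^{-1}}\CapBT(E,\D)$ arising from the length/extremal-function estimate of \cite{Kol94} as in Lemma \ref{lem: vol cap}. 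The factor $(-\log\eta)^n$ in the statement is precisely what accumulates from running the $n$-step induction, each step contributing one logarithmic factor from the relative extremal function $\varphi\in\Fc(\D)$ and the weak-type estimate $\CapBT(\{\varphi<-t\})\le \frac1t\int_\D dd^c\varphi$.

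**The main obstacle.**

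The delicate point will be obtaining the precise form $\eta + (-\log\eta)^n\Capa_\omega(E)$ rather than the cruder $\CapBT(E)^\gamma$ of Lemma \ref{lem: vol cap}. The difficulty is that at the critical exponent $\alpha=1$ the naive estimate only yields a $1/(-\log(\cdot))$-decay, which is too weak to close the induction with a clean power of capacity; the role of $\eta$ is exactly to \emph{linearize} this logarithmic behavior. Concretely, the hard part is arranging the truncation so that the small-mass region near $D$ is absorbed into the additive constant $A\eta$ while the logarithmic losses from each inductive step pile up into exactly the $(-\log\eta)^n$ factor multiplying the capacity — keeping careful track that at each stage the threshold $\eta$ is respected and that the extremal-function mass $\int_\D dd^c\varphi = \CapBT(E,\D^n)$ feeds correctly into the weak-type bound. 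Once the one-variable estimate is established with the right dependence on $\eta$, the induction should proceed along the lines of Lemma \ref{lem: vol cap}, replacing the power $\gamma$ throughout by the additive/multiplicative split dictated by $\eta$.
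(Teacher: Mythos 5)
Your overall scheme --- reduction to a local estimate via the comparability of $\Capa_\omega$ with $\CapBT(\cdot,\D^n)$, induction on dimension as in Lemma \ref{lem: vol cap}, with the logarithmic factors produced by the weak-type estimate $\CapBT(\{\f<-t\})\le\frac1t\int_\D dd^c\f$ --- is indeed the paper's scheme, but there is a genuine gap at exactly the point you yourself flag as delicate: how the additive term $A\eta$ arises. Your truncation is spatial, at $\{|w|\le\eta\}$, and as your own computation shows, the mass of $g_n$ on that region is $2\pi/(1-\log\eta)$, which decays only like $1/(-\log\eta)$. This is \emph{not} $O(\eta)$, and no ``absorption of the logarithmic factor'' can make it so, since $\eta(-\log\eta)\to 0$, i.e. $\eta = o\bigl(1/(-\log\eta)\bigr)$. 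The discrepancy is fatal, not cosmetic: in the intended application (Lemma \ref{lem: GZ exponential}) one takes $\eta=e^{-t}$ and needs the additive term to be exponentially small in $t$, so that it is compatible with the weight $e^{\vep t}$ in the function $F$; an error of size $1/(-\log\eta)=1/t$ would make that argument collapse. Shrinking the spatial truncation radius to something like $e^{-1/\eta}$ to force the near mass below $\eta$ does not help either, because the density on the complementary region is then only bounded by $e^{2/\eta}$, far worse than $(-\log\eta)^n$.

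The mechanism you are missing is that the truncation is not spatial but occurs in the layer-cake parameter of the fiberwise energy, and only in the inductive step. In dimension one no splitting is needed at all: Ko{\l}odziej's estimate already gives the linear bound $\vol_{g}(E)\le C\,\CapBT(E,\D)$ (the exponent loss $\gamma<\alpha$ in Lemma \ref{lem: vol cap} occurs only in the induction, not in the base case). In the step from $n-1$ to $n$, the induction hypothesis gives $\vol_g(E)\le A\int_\D\bigl(\eta+(-\log\eta)^{n-1}(-\f(w))\bigr)g_n(w)\,dV_2(w)$, where $\f(w)=-\int_{\D^{n-1}}(-h_w)(dd^ch_w)^{n-1}$ takes values in $[-A_0,0]$; one then writes $\int_\D(-\f)g_n\,dV_2=\int_0^{A_0}\vol_{g_n}(\{\f<-t\})\,dt$ and splits this $t$-integral at $t=\eta^2$. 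For $t<\eta^2$ one uses only the finiteness of $\vol_{g_n}(\D)$, so this piece is $O(\eta^2)$, hence still $O(\eta)$ after multiplication by $(-\log\eta)^{n-1}$; for $t\ge\eta^2$ one uses the one-dimensional domination followed by $\CapBT(\{\f<-t\})\le\frac1t\int_\D dd^c\f=\frac1t\CapBT(E,\D^n)$, so that $\int_{\eta^2}^{A_0}\frac{dt}{t}=O(-\log\eta)$ contributes exactly one extra logarithmic factor per inductive step. This $t$-split with threshold a power of $\eta$ is the missing idea; without it, your argument yields at best $\mu(E)\le A\bigl[1/(-\log\eta)+(-\log\eta)^n\Capa_{\omega}(E)\bigr]$, which is strictly weaker than (\ref{eq: refined vol-cap domination}) and insufficient for the sequel.
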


\begin{proof}
We only give a sketch of the proof  since it is essentially a copy of the proof of Lemma \ref{lem: vol cap} with a  small change. We also use the same notation as there. Without loss of generality we can  assume that $E\Subset \D^n_{\delta}\Subset \D^n$ for some small fixed 
$\delta$. The function $\varphi$  
belongs to $\Fc(\D)$. The same arguments as in Lemma \ref{lem: vol cap} show that $\f$ is also bounded from below by $-A_1$ for 
some universal constant $A_1>0$. In the final step we get
\begin{eqnarray*}
\vol_g(E) & \leq & A_2 \int_{\D} \left(\eta+ (-\log \eta)^{n-1}(-\f(w))\right) g_n(w) dV_2(w) \\
& = & A_3 \eta + A_2(-\log \eta)^{n-1}\int_{0}^{A_1}  \vol_{g_n}(\f<-t) dt\\
&\leq & A_3 \eta + A_4 \eta^2 (-\log \eta)^{n-1}+ A_5(-\log \eta)^{n-1}\int_{\eta^2}^{A_1} \Capo(\varphi<-t) dt\\
&\leq & A_6 \eta + A_5(-\log \eta)^{n-1}\int_{\eta^2}^{A_1} 
\frac{1}{t}\left[\int_{\D} dd^c \varphi \right] dt\\
&\leq & A_6 \eta + A_7(-\log \eta)^{n}\int_{\D} dd^c \varphi .
\end{eqnarray*}
\end{proof}

\begin{lem}\label{lem: GZ exponential}
Let $\varphi\in \Ec(X,\omega)$ be such that $\sup_X \varphi=0$ 
and $\mu=\MA(\varphi)$ satisfies (\ref{eq: refined vol-cap domination}) for 
some $A>0$. Then there exists $C, c>0$ depending on $A$ such that 
$$
\Capo(\varphi<-t) \leq C e^{-ct},  \ \forall t>0,
$$
In particular, if $\beta<c$ then $\int_X e^{-\beta \varphi}d\mu \leq C'$, with $C'=C(\beta,A)>0$. 
\end{lem}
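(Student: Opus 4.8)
The plan is to follow Ko\l odziej's scheme and monitor the non-increasing function $g(t):=\Capo(\{\f<-t\})$. First I would feed the hypothesis (\ref{eq: refined vol-cap domination}) into the comparison principle. Applying Proposition \ref{prop: EGZ09 SKE generalized} with $\p\equiv 0$ (so that $\Capis$ is the ordinary Monge-Amp\`ere capacity $\Capo$) gives, for every $t>0$ and $s\in(0,1]$,
$$
s^n\,\Capo(\{\f<-t-s\})\le \int_{\{\f<-t\}}\MA(\f)=\mu(\{\f<-t\}).
$$
Bounding the right-hand side by (\ref{eq: refined vol-cap domination}) applied to $E=\{\f<-t\}$ then yields, for all $\eta\in(0,1/e)$,
$$
s^n\,g(t+s)\le A\,\eta+A\,(-\log\eta)^n\,g(t).
$$
By Lemma \ref{lem: right continuous} (again with $\p\equiv 0$) the function $g$ is right-continuous, non-increasing and $g(+\infty)=0$, so the whole problem is reduced to extracting an exponential rate from this one-parameter family of scalar inequalities.

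The natural next move is to couple the free parameter $\eta$ to the current capacity: once $g(t)<1/e$, take $\eta=g(t)$ and $s=1$, which collapses the estimate to
$$
g(t+1)\le A\,g(t)\bigl(1+(-\log g(t))^n\bigr).
$$
Writing $w_k:=-\log g(k)$, this is the scalar recursion $w_{k+1}\ge w_k-\log A-\log\!\bigl(1+w_k^{\,n}\bigr)$, and one would like to compare $g$ with the solution of the associated one-dimensional problem, in the spirit of the iteration lemma of \cite[Lemma 2.4, Remark 2.5]{EGZ09} specialised to the logarithmic weight $x\mapsto x(-\log x)^n$.

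The hard part — and the reason the conclusion is only exponential decay, rather than the finite-time vanishing obtained in Theorem \ref{thm: general uniform} — is precisely the logarithmic loss $(-\log\eta)^n$. Unlike a super-linear domination $\mu(E)\le C\,\Capo(E)^{1+\vep}$, which contracts in one step, here the multiplier $A(-\log\eta)^n$ cannot be made smaller than $A$, so the recursion above is exactly critical: the increments $w_{k+1}-w_k$ may be negative for large $w_k$, and the naive one-step iteration does not by itself force a positive rate. Breaking this criticality is where I expect the real work to lie; the plan is to use the full strength of the inequality (all $s$ and all $\eta$ simultaneously), and to supplement it with the sharp Guedj--Zeriahi estimate $\vol_\omega(E)\le\exp(-C/\Capo(E)^{1/n})$ from \cite{GZ05} in order to balance the additive term $A\eta$ against the logarithmic term and thereby manufacture a genuine contraction over a fixed interval. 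This balancing is the delicate point, and it is exactly where the critical exponent $\alpha=1$ pins down the rate $c=c(A)>0$ in $g(t)=\Capo(\{\f<-t\})\le C e^{-ct}$.

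Finally, granting the exponential decay of $g$, the "in particular" statement is routine. Re-using (\ref{eq: refined vol-cap domination}) with the choice $\eta=e^{-ct}$ bounds the measure of the sublevel sets, $\mu(\{\f<-t\})\le C'e^{-c't}$, and then the layer-cake formula (using $\f\le 0$ and $\{e^{-\beta\f}>\lambda\}=\{\f<-\beta^{-1}\log\lambda\}$) gives
$$
\int_X e^{-\beta \f}\,d\mu=\mu(X)+\beta\int_0^{+\infty} e^{\beta t}\,\mu(\{\f<-t\})\,dt<+\infty
$$
for every $\beta<c'$, with the integral convergent precisely under this condition; this produces the constant $C'=C(\beta,A)$.
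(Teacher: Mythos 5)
Your reduction is sound as far as it goes --- Proposition \ref{prop: EGZ09 SKE generalized} with $\psi\equiv 0$ does give $s^n g(t+s)\le \mu(\{\f<-t\})$ for $s\in(0,1]$, and your layer-cake treatment of the ``in particular'' part is correct once the exponential decay of capacity is known --- but the heart of the lemma is precisely the step you leave open, and the completion you sketch cannot work. The family of scalar inequalities you arrive at,
$$
s^n g(t+s)\le A\eta+A(-\log\eta)^n g(t),\qquad s\in(0,1],\ \eta\in(0,1/e),
$$
carries no information at all when $A\ge 1$ (which one may always assume, since (\ref{eq: refined vol-cap domination}) for some $A$ implies it for every larger $A$): because $-\log\eta>1$, the right-hand side is at least $A(-\log\eta)^n g(t)\ge g(t)\ge s^n g(t+s)$, so \emph{every} non-increasing $g$ satisfies the whole family. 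Hence no argument built only on these inequalities --- however cleverly $\eta$ and $s$ are coupled --- can produce any decay rate; and the Guedj--Zeriahi volume-capacity estimate you propose to ``balance'' with has no entry point, because your inequality contains no volume term: hypothesis (\ref{eq: refined vol-cap domination}) controls $\mu$ only through $\eta$ and $\Capo$.

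The missing idea, which is the paper's key step, is to run the comparison principle with a \emph{large} rescaling parameter instead of Proposition \ref{prop: EGZ09 SKE generalized}: for $s,t>1$,
$$
\Capo(\{\f<-t-s\})\le \int_{\{\f<-t\}}\Bigl(\omega+\tfrac{1}{s}\,dd^c\f\Bigr)^n
\le \int_{\{\f<-t\}}\omega^n+\frac{2^n}{s}\int_{\{\f<-t\}}\MA(\f),
$$
where the binomial expansion of $s^{-n}\bigl((s-1)\omega+\omega+dd^c\f\bigr)^n$ is estimated term by term via the partial comparison principle \cite[Theorem 2.3]{Di09}. This is what breaks the criticality you identified: the coefficient $2^n/s$ of the measure term is now at your disposal, and the choice $s=2^nAe$ turns $\tfrac{2^n}{s}\cdot A$ into $\tfrac{1}{e}<1$. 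The extra additive term $\int_{\{\f<-t\}}\omega^n\le C_1e^{-at}$ decays exponentially by uniform Skoda integrability \cite{GZ05}; one then takes $\eta=e^{-t}$ in (\ref{eq: refined vol-cap domination}) and absorbs the resulting polynomial loss $t^n=(-\log\eta)^n$ into the weighted quantity $F(t)=e^{\vep t}t^{-n}\Capo(\{\f<-t\})$ with $\vep<\min(1,a,1/s)$, obtaining a genuine contraction $F(\cdot+s)\le C_2+bF(\cdot)$, $b=2^nAe^{\vep s}/s<1$. This bounds $F$, i.e. $\Capo(\{\f<-t\})\le C_3 t^n e^{-\vep t}\le Ce^{-ct}$ for any $c<\vep$. (For the final statement the paper invokes $\mu(\{\f<-t\})\le t^n\Capo(\{\f<-t\})$ from \cite[Lemma 2.3]{BGZ08} rather than re-using the hypothesis; your route is equally fine at that point.)
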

\begin{proof}
Fix  $s,t>1$. By standard application of the comparison principle we get
\begin{eqnarray}\label{eq: refined vol-cap domination 2}
\Capo(\varphi<-t-s) &\leq &\int_{\{\varphi<-t\}} \left(\omega+\frac{1}{s}dd^c \varphi\right)^n\\
&\leq & \frac{1}{s^n} \int_{\{\varphi<-t\}}  \sum_{k=0}^n C^k_n(s-1)^k\omega^k\wedge \omega_{\varphi}^{n-k} \nonumber \\
&\leq & \int_{\{\varphi<-t\}} \omega^n + \frac{2^n}{s} \int_{\{\varphi<-t\}} \MA(\varphi),\nonumber
\end{eqnarray}
where the last inequality follows from the partial comparison principle (see \cite[Theorem 2.3]{Di09}).  It follows from \cite{GZ05} that 
$$
\int_{\{\varphi<-t\}} \omega^n \leq C_1 e^{-at}, a>0.
$$
Choose $s:=2^{n}Ae$ and fix  $\varepsilon< \min(1,a,1/s)$.
Set 
$$
F(t):= \frac{e^{\varepsilon t}}{t^n}\Capo(\varphi<-t) ,\ t\geq 1.
$$
Now, if we choose $\eta=e^{-t}$ in  (\ref{eq: refined vol-cap domination}) and plug  (\ref{eq: refined vol-cap domination}) into (\ref{eq: refined vol-cap domination 2}) we get
$$
F(t^2+s) \leq C_2 + bF(t) , 
$$
where $b=2^nAe^{\varepsilon s}/s <1$. This yields $\sup_{t\geq 1} F(t) \leq C_3$, for some $C_3>0$ depending on $A$. We finally get 
$$
\Capo(\varphi<-t) \leq C e^{-ct}, c<\varepsilon. 
$$
The last statement easily follows since it follows from \cite[Lemma 2.3]{BGZ08} that
$$
\int_{\{\varphi<-t\}}\MA(\varphi)\leq t^n \Capo(\varphi<-t), 
\ \ \forall  t\geq 1.
$$
\end{proof}

\subsection{Proof of Theorem \ref{thm: main log}}\label{sect: proof main log}
Assume in this section that $f$ satisfies Condition $\Sc(B,\alpha)$
for some $B>0,\alpha>0$. The first part of Theorem \ref{thm: main log} was proved in Theorem \ref{thm: general uniform}. We divide the remaining parts into three cases depending on the value of $\alpha$.

\subsubsection{The case when $\alpha>1$} The continuity of $\f$ and the $\Cc^0$ estimate  follow directly from Lemma \ref{lem: vol cap} 
and Ko{\l}odziej's classical result (see \cite{Kol98}).

\subsubsection{The case when $0<\alpha<1$}
Fix $\beta\in (1-\alpha,1)$ and set $\delta=\alpha+\beta-1$, and 
$$
u_{\beta}:=\sum_{j=1}^N-a(-\log|s_j|)^{\beta},
$$
where $a>0$ is small enough so that $u_{\beta}\in \psh(X,\omega)$. 
By Theorem \ref{thm: general uniform} we have
$$
\varphi \geq \sum_{j=1}^N \log \vert s_j\vert -C_0,
$$
for some positive constant $C_0$ depending on $B$. By simple computations we obtain
$$
\MA(\varphi) \leq \frac{C_1f_{1-\beta}\omega^n}{(-\f)^{\delta}},
$$
for some positive constant $C_1$ depending on $C_0$. Here for each $r>0$, we set
$$
f_r:=\frac{1}{\prod_{j=1}^N|s_j|^2(-\log|s_j|)^{1+r}}.
$$
We also get 
$$
\MA(u_{\beta}-C_2) \geq \frac{C_1f_{1-\beta}\omega^n}{(-u_{\beta}+C_2)^{\delta}},
$$
where $C_2>0$ depends on $C_1,\delta$.
The comparison principle yields that $\f \geq u_{\beta}-C_2$.

\subsubsection{The case when $\alpha=1$}
Consider the model function
$$
\psi:= -A_1\sum_{j=1}^N \log(-\log \vert s_j\vert+A_2) ,
$$
where $A_1>0$ is big and $A_2$ is chosen so that $\psi$ is $\omega/2$-psh on $X$. It follows from 
Lemma \ref{lem: refined vol-cap domination alpha=1} 
and Lemma \ref{lem: GZ exponential} that $\int_X e^{-c\varphi} f\omega^n<C_1$ for some small constant $c>0$ depending on $B$. Here $C_1$ depends on $c$ and $B$.
Thus, for $t>0$, $p>1$,  by H\" older inequality we get
\begin{eqnarray*}
\int_{\{\varphi<\psi-t\}}\MA(\varphi) &\leq & \int_{\{\varphi<\psi-t\}}e^{-c \varphi/p} e^{c \psi/p} f\omega^n \\
&\leq & \left(\int_X e^{-c \varphi} f\omega^n\right)^{1/p} \left(\int_{\{\varphi<\psi-t\}} e^{c \psi/(p-1)} f\omega^n\right)^{1-1/p}\\
&\leq & C_2 \left(\Capis(\varphi<\psi-t)\right)^{\gamma},
\end{eqnarray*}
where $\gamma < A_1 c/p + (p-1)/p$ and $C_2>0$ is a universal constant. The last inequality follows from the volume-capacity 
domination (Lemma \ref{lem: vol cap}) and 
from Lemma \ref{lem: classical cap and cap psi}. 
Now if $A_1c>1$ 
we can choose $\gamma>1$ and the result follows as in Theorem \ref{thm: general uniform}.

\subsection{Regularity near the divisor $D$}\label{sect: asymp}
In this subsection we will discuss about the behavior of the solution to equation (\ref{eq: intro 1}) near the divisor $D$. We prove the following result when $\alpha<1$.
\begin{prop}\label{prop: asymptotic 1}
Consider $f= \frac{h}{\prod_{j=1}^N\vert s_j\vert^2(-\log \vert s_j\vert)^{1+\alpha}}$, where  $1/B\leq h\leq B$ on $X$ and $\alpha\in(0,1)$. Assume that $f$ is normalized so that $\int_X f\omega^n= \int_X \omega^n$. Let $\varphi\in \Ec(X,\omega)$ be the unique normalized solution of (\ref{eq: intro 1}). Then for each 
$0<p<1-\alpha$  and each $1-\alpha<q<1$,
we have
$$
-a_1(-\log \vert s\vert)^q -A_1\leq \varphi \leq -a_2(-\log \vert s\vert)^p +A_2,
$$
where $a_1,A_1>0$ depend on $B,\alpha, q$ while $a_2, A_2>0$ 
depend on $B,\alpha,p$.
In particular, the solution $\varphi$ goes to $-\infty$ on $D$.
\end{prop}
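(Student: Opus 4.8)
The plan is to establish the two inequalities separately. The lower bound is essentially already contained in Theorem \ref{thm: main log}, while the upper bound (which is what forces $\varphi\to-\infty$) requires constructing an explicit subsolution barrier and invoking the comparison principle.

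For the lower bound, observe that since $1/B\le h\le B$ the density satisfies Condition $\Sc(B,\alpha)$, so part (c) of Theorem \ref{thm: main log} applies with $\beta=q\in(1-\alpha,1)$: there are constants $a,A>0$ depending on $\alpha,q,B$ with $\varphi\ge -a\sum_{j=1}^N(-\log|s_j|)^q-A$. Writing $-\log|s|=\sum_{j}(-\log|s_j|)$ and using the elementary inequality $\sum_j t_j^q\le N^{1-q}\big(\sum_j t_j\big)^q$, valid for $q\in(0,1)$ by concavity, this rewrites as $\varphi\ge -a_1(-\log|s|)^q-A_1$ with $a_1=aN^{1-q}$, as desired.

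For the upper bound I would use the barrier $w:=-a_2\sum_{j=1}^N L_j^p$, where $L_j:=-\log|s_j|_{h_j}$. By Poincaré--Lelong, $dd^cL_j=\Theta_j$ is a bounded smooth form away from $D_j$, so a direct computation gives
\[
\omega+dd^cw=\omega-a_2\sum_j p\,L_j^{p-1}\Theta_j+a_2\sum_j p(1-p)\,L_j^{p-2}\,dL_j\wedge d^cL_j .
\]
Since $L_j\ge1$ and $p<1$, the middle term is $O(a_2)$, so for $a_2$ small $w\in\psh(X,\omega)$ and the last positive rank-one terms carry all the singularity of $\MA(w)$. Because $(dL_j\wedge d^cL_j)^2=0$, expanding $(\omega+dd^cw)^n$ near a $k$-fold crossing $\{z_1=\dots=z_k=0\}$ (where $|s_{j_l}|\simeq|z_l|$) shows that the most singular contribution is $\simeq a_2^{k}\prod_{l}L_{j_l}^{p-2}\big/\prod_l|z_l|^2$, the remaining terms being strictly less singular. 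Comparing with $f\gtrsim (1/B)\prod_l\big(|z_l|^2L_{j_l}^{1+\alpha}\big)^{-1}$, the ratio $\MA(w)/(f\omega^n)$ is governed by $\prod_l L_{j_l}^{\,p-1+\alpha}$, which tends to $0$ on approach to $D$ precisely because $p<1-\alpha$. Hence there is a neighborhood $U$ of $D$ on which $\MA(w)\le f\omega^n=\MA(\varphi)$, the inequality being strict Lebesgue-a.e.

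It then remains to run the comparison principle. The density computation shows $\MA(w)$ has finite mass as an $L^1$ density and $w$ has zero Lelong numbers, so $w\in\EcX$ (one checks $w$ has full mass). Fix $A_2$ so large that $\{w<-A_2\}\subset U$; since $\sup_X\varphi=0$ we have $\{w+A_2<\varphi\}\subset\{w<-A_2\}\subset U$. Applying the generalized comparison principle of \cite{BEGZ10} with $w+A_2\in\EcX$ and $\varphi\in\psh(X,\omega)$ gives
\[
\int_{\{w+A_2<\varphi\}}\MA(\varphi)\le\int_{\{w+A_2<\varphi\}}\MA(w)\le\int_{\{w+A_2<\varphi\}}\MA(\varphi),
\]
so $\int_{\{w+A_2<\varphi\}}\big(f-\MA(w)/\omega^n\big)\,\omega^n=0$. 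As the integrand is strictly positive a.e. on $U$, the open set $\{w+A_2<\varphi\}\cap(X\setminus D)$ has measure zero, hence is empty, and $\varphi\le w+A_2$. Using $\big(\sum_jL_j\big)^p\le\sum_jL_j^p$ converts this into $\varphi\le -a_2(-\log|s|)^p+A_2$, which in particular shows $\varphi\to-\infty$ along $D$. The main obstacle is the Monge--Amp\`ere computation for $w$: one must control every term of the binomial expansion of $(\omega+dd^cw)^n$ uniformly near the possibly high-order crossings of $D$ and verify domination by $f\omega^n$, the precise margin being dictated by $p<1-\alpha$; a secondary technical point is checking $w\in\EcX$ so that the comparison principle is available.
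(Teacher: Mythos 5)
Your lower bound is exactly the paper's: both invoke part (c) of Theorem \ref{thm: main log}, and your conversion between $\sum_j(-\log|s_j|)^q$ and $(-\log|s|)^q$ via concavity/subadditivity of $t\mapsto t^q$ is fine. For the upper bound you use the same barrier as the paper ($w=-a_2\sum_j(-\log|s_j|)^p$ is the paper's $u_p$), and the same exponent arithmetic ($p-2<-1-\alpha$ iff $p<1-\alpha$), but you run the comparison differently. The paper never localizes near $D$: it sets $\delta=(1-\alpha-p)/p$ and proves the \emph{global} inequalities $\MA(u_p)\le C_3f\omega^n/(-u_p)^{\delta}$ and $\MA(\varphi-A_2)= f\omega^n\ge C_3f\omega^n/(-\varphi+A_2)^{\delta}$ (the latter only because $(-\varphi+A_2)^{\delta}\ge A_2^{\delta}=C_3$), so that $u_p$ and $\varphi-A_2$ become a super/subsolution pair for an equation whose right-hand side is monotone in the unknown, and one application of the comparison principle finishes. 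Your variant---domination $\MA(w)<f\omega^n$ only on a neighborhood $U$ of $D$, then forcing the contact set $\{w+A_2<\varphi\}$ into $U$ by taking $A_2$ large and using $\varphi\le 0$---is a legitimate alternative and arguably more transparent: it trades the paper's weight $(-u)^{\delta}$ (which is what makes the needed inequality global) for a localization step.

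Two points in your write-up need repair. First, your justification that $w\in\EcX$ is not a valid implication: having an $L^1$ Monge--Amp\`ere density on $X\setminus D$ together with zero Lelong numbers does \emph{not} imply full mass. Non-pluripolar mass can be deficient without any positive Lelong number (already in dimension one: the potential of an atomless measure carried by a polar Cantor set has zero Lelong numbers everywhere but is not in $\Ec$). For this specific $w$ the full-mass property is true, but it requires an actual argument (e.g. a cutoff/Stokes computation near $D$ showing that no mass concentrates on the divisor, using $p<1$ so that $d^cw$ decays on small tubes around $D$); note that the paper needs exactly the same fact for its $u_p$ and is equally silent about it, since its invocation of the generalized comparison principle also requires the barrier to lie in $\EcX$. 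Second, the set $\{w+A_2<\varphi\}\cap(X\setminus D)$ is \emph{not} open: $\varphi$ is only upper semicontinuous, so $\{\varphi>w+A_2\}$ is a superlevel set of an usc function (relatively closed, if anything), and "measure zero, hence empty" does not apply. The conclusion survives by a standard replacement: once the set has zero Lebesgue measure you have $\varphi\le w+A_2$ almost everywhere, and since both sides are quasi-psh the inequality propagates to every point because a quasi-psh function is the limit of its ball averages. With these two repairs your argument is complete.
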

\begin{proof}
One inequality has been proved in Theorem \ref{thm: main log}. Let us prove the upper bound. We normalize $\varphi$ such that $\sup_X \varphi=-1$. To simplify the notation we denote, for each $r>0$, 
$$
f_r:= \frac{1}{\prod_{j=1}^N\vert s_j\vert^2(-\log \vert s_j\vert)^{1+r}} .
$$
Fix $p\in (0,1-\alpha)$ set $\delta:= (1-\alpha-p)/p >0$.
Consider $u_p:=-\sum_{j=1}^N a_2(-\log \vert s_j\vert)^p$, where $a_2>0$ is small so that $u_p$ is $\omega$-psh on $X$. Then we can find $C_3>0$ such that
$$
\MA(u_p) \leq \frac{C_3 f\omega^n}{(-u_p)^{\delta} },
$$
while since $\varphi\leq 0$, for some $A_2>0$ big enough (for instance $A_2^{\delta}=C_3$) we have
$$
\MA(\varphi-A_2) \geq \frac{C_3 f\omega^n}{(-\varphi+A_2)^{\delta} }.
$$
The comparison principle then yields the desired upper bound.
\end{proof}

By the same way we obtain a similar upper bound  when $\alpha=1$.
\begin{prop}\label{prop: asymptotic 2}
Assume that $f$ is normalized so that $\int_X f\omega^n= \int_X \omega^n$ and 
$$
f\geq  \frac{1}{B\prod_{j=1}^N\vert s_j\vert^2(-\log \vert s_j\vert)^{2}}.
$$
Let $\varphi\in \Ec(X,\omega)$ be the unique normalized solution of (\ref{eq: intro 1}). Then for any $p\in (0,1)$ there exist $a, A >0$ depending on $B,p$ such that 
$$
\varphi \leq -a \sum_j\left[\log (-\log \vert s_j\vert)\right]^p +A.
$$
In particular, $\f$ is not bounded and goes to $-\infty$ on $D$.
\end{prop}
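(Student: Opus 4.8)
The plan is to follow the template of Proposition~\ref{prop: asymptotic 1}: produce an explicit $\omega$-psh model $u$ with $u\to-\infty$ along $D$ whose Monge-Amp\`ere measure is dominated by $f\omega^n$ divided by a positive power of $-u$, and then compare $u$ with $\varphi$ through the generalized comparison principle. Fix $p\in(0,1)$ and set
$$
u:=-a\sum_{j=1}^N\left[\log(-\log|s_j|)\right]^p,
$$
choosing $a>0$ small enough that $u\in\psh(X,\omega)$. Its singularities along $D$ are sub-logarithmic, so $u$ has zero Lelong numbers and $u\in\EcX$ (exactly as for the model in Proposition~\ref{prop: asymptotic 1}). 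Normalizing $\sup_X\varphi=0$, we have $\varphi\leq 0$ and $\MA(\varphi)=f\omega^n$.

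First I would compute $\MA(u)$ near $D$. In local coordinates adapted to a crossing of the components passing through a given point, write $L_j:=-\log|s_j|$ and $F(L):=a(\log L)^p$, so that $u=-\sum_j F(L_j)$ modulo smooth terms. Since each $L_j$ is pluriharmonic off $D$, one gets $dd^c u=\sum_j(-F''(L_j))\,dL_j\wedge d^c L_j$, a positive form whose rank equals the number of components through the point; a direct computation gives $-F''(L)\asymp(\log L)^{p-1}/L^2$ for large $L$. Expanding $(\omega+dd^c u)^n$ and retaining the leading (most singular) term, which is the product over the components meeting the point, yields on $X\setminus D$
$$
\MA(u)\leq C'\Big(\prod_{j}(\log L_j)^{p-1}\Big)\,\frac{\omega^n}{\prod_{j}|s_j|^2 L_j^2},
$$
the product ranging over those components and $C'=C'(a,p,n,N)$. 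Since $f\geq \tfrac1B\prod_j|s_j|^{-2}L_j^{-2}$ by hypothesis, the right-hand side is $\leq C'B\big(\prod_j(\log L_j)^{p-1}\big)f\omega^n$; both measures charge no mass on the pluripolar set $D$, so this holds as an inequality of measures near $D$.

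The decisive step is to absorb the factor $\prod_j(\log L_j)^{p-1}$ into a negative power of $-u$. With $\delta:=(1-p)/p>0$ and $M:=\max_j\log L_j$ over the components through the point, one has $-u\asymp M^p$, hence $M\asymp(-u)^{1/p}$; moreover $p-1<0$ and $\log L_j\geq 1$ near $D$, so each factor is $\leq 1$ and $\prod_j(\log L_j)^{p-1}\leq M^{p-1}\asymp(-u)^{-\delta}$. This gives $\MA(u)\leq C\,f\omega^n/(-u)^\delta$ near $D$, for some $C=C(a,p,B,n,N)>0$. Set $A:=C^{1/\delta}$. On $\{u<\varphi-A\}$ we have $u<-A$, hence $-u>A$, so pointwise $\MA(u)\leq C f\omega^n/(-u)^\delta< C f\omega^n/A^\delta=f\omega^n=\MA(\varphi-A)$; this set lies near $D$ for $A$ large, where the previous bound applies. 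Applying the generalized comparison principle with $u\in\EcX$ and $\varphi-A\in\psh(X,\omega)$,
$$
\int_{\{u<\varphi-A\}}f\omega^n=\int_{\{u<\varphi-A\}}\MA(\varphi-A)\leq\int_{\{u<\varphi-A\}}\MA(u),
$$
which is impossible unless $\{u<\varphi-A\}$ is negligible; thus $\varphi\leq u+A$. Since the right-hand side tends to $-\infty$ along $D$, so does $\varphi$, which in particular is unbounded.

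I expect the main obstacle to be the Monge-Amp\`ere computation of $u$ together with the uniform matching against $(-u)^{-\delta}$ over all strata of the snc divisor: one must verify that near a crossing of several components the leading contribution to $(\omega+dd^c u)^n$ is exactly the product over the components through the point (the lower-order terms being less singular), that the data attached to components not passing through the point stay bounded, and that the estimate $\prod_j(\log L_j)^{p-1}\leq(-u)^{-\delta}$ holds with constants independent of the stratum. A secondary technical point is the justification that $u\in\EcX$, needed to place $u$ in the first slot of the generalized comparison principle.
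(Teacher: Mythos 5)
Your proposal is correct and takes essentially the same route as the paper: the paper's proof of this proposition is literally ``the same arguments as in Proposition~\ref{prop: asymptotic 1}'', i.e., an explicit model function whose Monge-Amp\`ere measure is dominated by $C f\omega^n/(-u)^{\delta}$ followed by the generalized comparison principle, which is exactly what you carry out with $u=-a\sum_j[\log(-\log|s_j|)]^p$ and $\delta=(1-p)/p$. The only cosmetic point is that to make $u$ genuinely $\omega$-psh near $\{|s_j|=1/e\}$ (where $\log(-\log|s_j|)$ vanishes and its derivatives blow up) you should work with $\log(A_2-\log|s_j|)$ for a suitable constant $A_2$ --- the same adjustment the paper makes for the model function in case (b) of Theorem~\ref{thm: main log} --- which changes $u$ only by a bounded amount and hence does not affect the stated bound.
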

\begin{proof}
The proof uses the same arguments as in Proposition 
\ref{prop: asymptotic 1}. 
\end{proof}

\section{The case of semipositive and big classes}\label{sect: semi}
In this section we prove Theorem \ref{thm: main semi}. For convenience let us recall the setting. We assume that $(X,\omega)$
is a compact K\"ahler manifold of dimension $n$ and $D$ is an
arbitrary divisor on $X$.  
Let $E= \sum_{j=1}^M a_jE_j$ be an effective  snc divisor on $X$.  Let $\theta$ be a smooth  semipositive form on $X$ such that $\int_X \theta^n>0$ and 
$\{\theta\}-c_1(E)$ is ample. Consider the following degenerate complex Monge-Amp\`ere equation
\begin{equation}\label{eq: MAeq semipositive bis}
(\theta + dd^c \varphi)^n = f\omega^n,
\end{equation}
where $0\leq f\in L^1(X,\omega^n)$ satisfies the compatibility 
condition $\int_X f\omega^n = \int_X \theta^n$. 

For each $j=1,...,M$ let  $K_j$ be the holomorphic line bundle defined by $E_j$. Let 
$\sigma_j$ be a holomorphic section of  $K_j$ that vanish on 
$E_j$. We fix hermitian metric $h_j$ on $K_j$ such that $\vert \sigma_j\vert \leq 1/e$. Since $\{\theta\}-c_1(E)$ is ample, we can assume that 
$$
\theta+dd^c \phi=\omega_0+[E],
$$ 
where $\omega_0$ is a K\"{a}hler form on $X$ and 
$$
\phi:=\sum_{j=1}^M a_j\log\vert \sigma_j\vert.
$$
By rescaling $\omega$ we can also assume that $\omega_0\geq \omega$. Recall that $f$ satisfies Condition $\Hc_f$ on $X$, i.e. there is 
a constant $C>0$ such that
\begin{equation}\label{eq: cond Hf semi}
f=e^{\p^+-\p^-}, \ \ dd^c \p^{\pm} \geq -C\omega ,\ \sup_X \p^+\leq C,\ \ \p^-\in L^{\infty}_{\rm loc}(X\setminus D).
\end{equation}

\subsection{Uniform estimate}
The following $\Cc^0$-lower bound can be proved in the same ways as we have done in Theorem \ref{thm: general uniform}:
\begin{thm}\label{thm: uniform poly semi}
Assume that $D,E$ and $\theta$ are as above and $f$ satisfies (\ref{eq: cond Hf semi}). Let $\varphi$ be  the unique normalized solution to 
equation (\ref{eq: MAeq semipositive bis}). Then $\varphi$ is uniformly bounded away from $D\cup E$. More precisely, for any $a>0$ there exists $A>0$ depending on $C$ and $\int_X e^{-2\f/a}\omega^n$ such that
$$
\varphi \geq a \p^- +\phi -A .
$$
\end{thm}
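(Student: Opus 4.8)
The plan is to follow verbatim the scheme of Theorem \ref{thm: general uniform}, the only genuinely new ingredient being the choice of the comparison potential, which must now also absorb the singularity of $\theta$ along $E$. After normalizing $\sup_X\varphi=0$ and using the (non-unique) decomposition of $f$ to arrange $\psi^-\le 0$ while keeping $dd^c\psi^\pm\ge -C\omega$ and $\sup_X\psi^+\le C$, I would first reduce to proving the estimate for a single small $a>0$: since $\psi^-\le 0$ and $\phi\le 0$, a lower bound for some fixed $a_0$ forces $a\psi^-+\phi\le a_0\psi^-+\phi$ for every $a\ge a_0$, so the case of large $a$ is automatic. Fix then $a\in(0,1/(2C))$ and set
\[
\psi:=a\psi^-+\phi.
\]
Because $\theta+dd^c\phi=\omega_0+[E]$ with $\omega_0\ge\omega$ K\"ahler and $dd^c\psi^-\ge -C\omega\ge -C\omega_0$, one gets $\theta+dd^c\psi=\omega_0+[E]+a\,dd^c\psi^-\ge (1-aC)\omega_0+[E]\ge\tfrac12\omega$ as positive currents; in particular $\psi\in\psh(X,\theta)$.

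The key structural point is that this lower bound $\theta+dd^c\psi\ge\tfrac12\omega$ plays exactly the role of the hypothesis $\psi\in\psh(X,\omega/2)$ in Lemma \ref{lem: classical cap and cap psi}. Indeed, for any $u\in\psh(X,\tfrac12\omega)$ with $-1\le u\le 0$ the function $\psi+u$ is a competitor for the generalized $\psi$-capacity computed with respect to $\theta$, and $\theta+dd^c(\psi+u)\ge \tfrac12\omega+dd^c u\ge0$; hence, exactly as in the proof of Lemma \ref{lem: classical cap and cap psi}, for every Borel set $F$,
\[
\Capa_{\omega/2}(F)\le\int_F(\tfrac12\omega+dd^c u)^n\le\int_F(\theta+dd^c(\psi+u))^n\le\Capis(F),
\]
where $\Capis$ is henceforth taken with respect to $\theta$. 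Combined with the scaling inequality $\Capo\le 2^n\Capa_{\omega/2}$ this yields $\Capo\le 2^n\Capis$, the precise analogue of the domination used in Theorem \ref{thm: general uniform}. All the preliminary results of Section \ref{sect: basic properties} remain valid with $\omega$ replaced by the semipositive big form $\theta$, as already noted there.

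With this in hand the iteration is identical. Applying Proposition \ref{prop: EGZ09 SKE generalized} (with respect to $\theta$) gives $s^n\Capis(\varphi<\psi-t-s)\le\int_{\{\varphi<\psi-t\}}(\theta+dd^c\varphi)^n$, and since $(\theta+dd^c\varphi)^n=f\omega^n$ with
\[
f\,e^{\psi/a}=e^{\psi^+}\,e^{\phi/a}\le e^{C}
\]
(using $\psi^+\le C$ and $\phi\le 0$), on $\{\varphi<\psi-t\}$ one has $f\omega^n\le e^{-\varphi/a}e^{\psi/a}f\omega^n\le e^C e^{-\varphi/a}\omega^n$. The volume--capacity estimate of \cite{GZ05} for the K\"ahler form $\omega$, together with H\"older's inequality, then bounds $\int_{\{\varphi<\psi-t\}}e^{-\varphi/a}\omega^n$ by $C_3\,\Capo(\varphi<\psi-t)^2\le C_3\,\Capis(\varphi<\psi-t)^2$, with $C_3$ controlled by $C$ and $\int_X e^{-2\varphi/a}\omega^n$. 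Setting $H(t)=\Capis(\varphi<\psi-t)^{1/n}$ we obtain $sH(t+s)\le C_4 H(t)^2$; by Lemma \ref{lem: right continuous} the function $H$ is right-continuous with $H(+\infty)=0$, so \cite[Lemma 2.4]{EGZ09} gives $\varphi\ge\psi-C_5=a\psi^-+\phi-A$, the normalization constants being absorbed into $A$, which depends only on $C$ and $\int_X e^{-2\varphi/a}\omega^n$. Finally, since $\psi^-$ is locally bounded off $D$ and $\phi$ off $E$ while $\varphi\le 0$, the bound gives local $\Cc^0$ control on $X\setminus(D\cup E)$.

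The main obstacle is genuinely the construction of $\psi$: everything else is a transcription of the K\"ahler argument, but it is the domination $\Capo\le 2^n\Capis$ that makes the Ko\l odziej iteration run, and securing it requires precisely the hypothesis that $\{\theta\}-c_1(E)$ be ample, encoded in the identity $\theta+dd^c\phi=\omega_0+[E]$ with $\omega_0$ strictly positive. Some care is also needed to interpret $[E]$ via non-pluripolar products throughout, but this is routine.
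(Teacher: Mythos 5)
Your proof is correct, and it follows the paper's overall scheme: reduce to small $a>0$, build a comparison potential $\psi$ out of $a\psi^-$ and $\phi$, dominate an auxiliary Monge--Amp\`ere capacity by the generalized capacity $\Capis$ (taken with respect to $\theta$), and then rerun the iteration of Theorem \ref{thm: general uniform} via Proposition \ref{prop: EGZ09 SKE generalized} and \cite[Lemma 2.4]{EGZ09}. The one place where you genuinely diverge is the capacity-domination step. The paper takes $\psi:=a\psi^-+\tfrac12\phi\in\psh(X,\theta/2)$, proves $\Capa_{\theta/2}\leq\Capis$, and then invokes Proposition 3.1 of \cite{EGZ09}, i.e.\ the volume--capacity estimate for the \emph{semipositive big} form $\theta/2$. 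You instead take $\psi:=a\psi^-+\phi$, use the ampleness of $\{\theta\}-c_1(E)$ (through $\theta+dd^c\phi=\omega_0+[E]$ with $\omega_0\geq\omega$) to get $\theta+dd^c\psi\geq\tfrac12\omega$, and deduce $\Capa_{\omega/2}\leq\Capis$; this lets you quote only the \emph{K\"ahler} volume--capacity estimate of \cite{GZ05} together with the scaling $\Capo\leq 2^n\Capa_{\omega/2}$, exactly as in Theorem \ref{thm: general uniform}. Your route is thus slightly more self-contained (no semipositive analogue of the volume--capacity estimate is needed), at the cost of proving the bound with $\phi$ rather than the marginally stronger $\tfrac12\phi$ that the paper's choice yields; both variants use the same monotonicity of non-pluripolar products as in Lemma \ref{lem: classical cap and cap psi} to absorb the current $[E]$ hidden in $\theta+dd^c\psi$, and both produce constants depending only on $C$ and $\int_X e^{-2\f/a}\omega^n$. (One cosmetic slip: in your displayed chain the first inequality should be read as ``for every competitor $u$'', with the supremum over $u$ taken at the end, as in the paper's Lemma \ref{lem: classical cap and cap psi}.)
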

\begin{proof}
It suffices to prove the result for small $a>0$. Fix $a>0$ very small
so that 
$$
\p:= a\p^- +\frac{1}{2}\phi \in \psh(X,\theta/2).
$$
It follows from Proposition 3.1 in \cite{EGZ09} that 
$$
\vol_{\omega} \leq C_1 \exp\left(\frac{-C_2}{\left[\Capa_{\theta/2}\right]^{1/n}} \right),
$$
for some universal constants $C_1,C_2>0$. Now, the same proof of Lemma  \ref{lem: classical cap and cap psi} yields
$$
\Capa_{\theta/2} \leq \Capis,
$$
where $\Capis$ is the generalized capacity defined by
the form $\theta$ and $\psi$:
$$
\Capis(E):= \sup\left\{ \int_E  (\theta+dd^cu)^n \ \setdef\ u\in \psh(X, \theta) , \ \psi-1\leq u\leq \psi\right\}.
$$
Then we can repeat the arguments in the proof of Theorem  \ref{thm: general uniform} to get the result.
\end{proof}

\subsection{Laplacian estimate}
We now prove a $C^2$ a priori estimate in the semipositive and big case. Even when $f$ is smooth on $X$, $\f$ is only smooth in the ample locus of $\theta$. To get rid of this, we replace $\theta$ by 
$\theta+ t\omega$, $t>0$. In principle, the $\Cc^2$ estimate will depends heavily on $t>0$ and we will have serious problem when $t\downarrow 0$. But, fortunately, the so-called Tsuji's trick (see \cite{Ts98}) allows 
us to get around this difficulty. In the sequel, we follow essentially 
the ideas in \cite{BEGZ10}. 

\begin{thm}\label{thm: C2 estimate_semi}
Let $f=e^{\psi^+-\psi^-}$ where $\psi^+,\psi^-$ are smooth on $X$. 
Fix $t\in(0,1)$. Let $\varphi\in \Cc^{\infty}(X)$ be the unique normalized solution to
$$
(\theta+ t\omega+ dd^c \varphi)^n=e^{\psi^+-\psi^-}\omega^n.
$$
Assume given a constant  $C>0$ such that
$$
dd^c \psi^{\pm}\geq -C\omega , \ \  \sup_X \psi^+\leq C .
$$
Assume also that the holomorphic bisectional curvature of $\omega$ is
bounded from below by $-C$. 
Then there exists $A>0$ depending on $C$ and $\int_X e^{-2(4C+1)\f}\omega^n$ such that 
$$
\Delta_\omega \varphi \leq A e^{-2\psi^--(4C+1)\phi}.
$$
\end{thm}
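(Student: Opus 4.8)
The plan is to run the maximum-principle argument of Theorem \ref{thm: C2 estimate} almost verbatim, the only new ingredient being Tsuji's trick to cope with the fact that $\theta+t\omega$ is not a fixed metric of bounded geometry and may degenerate as $t\downarrow 0$. Write $\omega_{t,\varphi}:=\theta+t\omega+dd^c\varphi$. From the equation $\omega_{t,\varphi}^n=e^{\psi^+-\psi^-}\omega^n$ we read off $\ric(\omega_{t,\varphi})=\ric(\omega)-dd^c(\psi^+-\psi^-)$, so the lower bound $-C$ on the bisectional curvature of $\omega$ together with Lemma 2.2 of \cite{CGP13} yields, by the same computation leading to (\ref{eq: C2 estimate 3}),
\[
\Delta_{\omega_{t,\varphi}}\log\tr_\omega(\omega_{t,\varphi})\geq -3C\,\tr_{\omega_{t,\varphi}}(\omega)-\Delta_{\omega_{t,\varphi}}\psi^-.
\]
Note this step is insensitive to having $\theta+t\omega$ rather than $\omega$ in the background, since only $\ric(\omega_{t,\varphi})$ and the geometry of $\omega$ enter.

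The essential modification is in the choice of test function. Recall from the setup that $\theta+dd^c\phi=\omega_0+[E]$ with $\omega_0$ K\"ahler and, after rescaling $\omega$, $\omega_0\geq\omega$. Thus away from $E$ we may write $\omega_{t,\varphi}=\omega_0+t\omega+dd^c u$ with $u:=\varphi-\phi$, so that there
\[
\Delta_{\omega_{t,\varphi}}u=n-\tr_{\omega_{t,\varphi}}(\omega_0+t\omega)\leq n-\tr_{\omega_{t,\varphi}}(\omega),
\]
the last inequality using $\omega_0+t\omega\geq\omega$. This is exactly the point: the naive substitute $u=\varphi$ would return $\tr_{\omega_{t,\varphi}}(\theta+t\omega)\geq t\,\tr_{\omega_{t,\varphi}}(\omega)$, whose coefficient collapses as $t\downarrow 0$, whereas the comparison $\omega_0\geq\omega$ produces a curvature term of coefficient $1$, independent of $t$.

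I would then apply the maximum principle to
\[
H:=\log\tr_\omega(\omega_{t,\varphi})+2\psi^--(1+4C)u.
\]
Since $\psi^-$ is smooth and $\tr_\omega(\omega_{t,\varphi})$ is a smooth positive function on $X$, while $-(1+4C)u=(1+4C)\phi-(1+4C)\varphi\to-\infty$ along $E$, the function $H$ attains its maximum at some $x_0\in X\setminus E$, where $u$, $\phi$ and $\omega_0$ are smooth and the two displayed inequalities are valid. Combining them with $\Delta_{\omega_{t,\varphi}}\psi^-\geq -C\,\tr_{\omega_{t,\varphi}}(\omega)$ gives, at $x_0$,
\[
0\geq\Delta_{\omega_{t,\varphi}}H\geq\tr_{\omega_{t,\varphi}}(\omega)-(1+4C)n,
\]
so $\tr_{\omega_{t,\varphi}}(\omega)(x_0)\leq(1+4C)n$. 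Lemma \ref{lemm1} and $\sup_X\psi^+\leq C$ then bound $\log\tr_\omega(\omega_{t,\varphi})(x_0)\leq A_1-\psi^-(x_0)$, whence $H(x_0)\leq A_1+\psi^-(x_0)-(1+4C)(\varphi(x_0)-\phi(x_0))$. Feeding in the $\Cc^0$ estimate of Theorem \ref{thm: uniform poly semi} with $a=1/(1+4C)$, namely $\varphi\geq a\psi^-+\phi-A_2$, the occurrences of $\psi^-(x_0)$ cancel and $H\leq H(x_0)\leq A_3$ on all of $X$. Unwinding $H$ and using $\sup_X\varphi=0$ gives $\tr_\omega(\omega_{t,\varphi})\leq A\,e^{-2\psi^--(4C+1)\phi}$; since $\Delta_\omega\varphi=\tr_\omega(\omega_{t,\varphi})-\tr_\omega(\theta+t\omega)\leq\tr_\omega(\omega_{t,\varphi})$, the claim follows with $A$ depending only on $C$ and $\int_X e^{-2(4C+1)\varphi}\omega^n$, and crucially independent of $t$.

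The main obstacle is precisely this $t$-uniformity. A literal transcription of Theorem \ref{thm: C2 estimate} fails because $\Delta_{\omega_{t,\varphi}}\varphi$ feeds back $\tr_{\omega_{t,\varphi}}(\theta+t\omega)$ rather than $\tr_{\omega_{t,\varphi}}(\omega)$, and $\theta$ may vanish on $X\setminus E$. Tsuji's trick, absorbing the singular potential $\phi$ into the unknown so that the degenerate $\theta$ is traded for the genuine K\"ahler form $\omega_0\geq\omega$ at the price of forcing the extremum of $H$ off $E$, is what simultaneously repairs the sign of the curvature term and keeps every constant uniform as $t\downarrow 0$.
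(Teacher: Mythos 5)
Your proof is correct and follows essentially the same route as the paper: the identical curvature inequality inherited from Theorem \ref{thm: C2 estimate}, Tsuji's trick of replacing $\varphi$ by $u=\varphi-\phi$ so that $\omega_0\geq\omega$ supplies a $t$-independent trace term and forces the maximum of $H=\log\tr_\omega(\omega_{t,\varphi})+2\psi^--(4C+1)(\varphi-\phi)$ off $E$, followed by Lemma \ref{lemm1} and the $\Cc^0$ estimate of Theorem \ref{thm: uniform poly semi} with $a=1/(4C+1)$. The only (harmless) cosmetic differences are your explicit mention of the Ricci identity behind the \cite{CGP13} inequality and your spelled-out final step $\Delta_\omega\varphi\leq\tr_\omega(\omega_{t,\varphi})$, which the paper leaves implicit.
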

\begin{proof}
Ignoring the dependence on $t$, we denote 
$\omega_{\varphi}:=\theta+t\omega+dd^c\varphi$. Consider the following
function
$$
H:= \log \tr_{\omega}(\omega_{\varphi}) + 2\psi^- 
- (4C+1)(\varphi-\phi),
$$
Since $\phi$ goes to $-\infty$ on $E$, 
we see that $H$ attains its maximum on $X\setminus E$ at some point 
$x_0\in X\setminus E$. From now on we carry all computations on 
$X\setminus E$. 
We can argue as in Theorem \ref{thm: C2 estimate} to obtain
\begin{eqnarray}\label{eq: C2 estimate semi 1}
\Delta_{\omega_\varphi}\log \tr_{\omega}(\omega_\varphi)\geq -3C\tr_{\omega_\varphi}(\omega)-\Delta_{\omega_\f}\psi^-.
\end{eqnarray}
Since $\omega_0+t\omega \geq \omega$ we get
\begin{equation}\label{eq: C2 estimate semi 2}
\Delta_{\omega_\varphi}(\varphi-\phi) \leq \tr_{\omega_{\varphi}}
(\omega_{\varphi}-\omega_0-t\omega) \leq n -\tr_{\omega_{\varphi}}
(\omega).
\end{equation}
Therefore,  from (\ref{eq: C2 estimate semi 1}) and 
(\ref{eq: C2 estimate semi 2}) we deduce that on $X\setminus E$
\begin{eqnarray*}
\Delta_{\omega_\varphi}H &\geq & \tr_{\omega_\varphi}(\omega)-n (4C+1) .
\end{eqnarray*}
We now apply the maximum principle to the function $H$ at $x_0$:
$$
0\geq \Delta_{\omega_\f} H(x_0)\geq \tr_{\omega_\varphi}(\omega)(x_0)-n (4C+1).
$$ 
Furthermore, by Lemma \ref{lemm1} we get
$$
\tr_{\omega}(\omega_\f)(x_0)\leq n e^{\psi^+-\psi^-}(x_0) \left(\tr_{\omega_\varphi}(\omega)\right)^{n-1}(x_0)\leq A_1 e^{\psi^+-\psi^-}(x_0) ,
$$ 
and hence, since $\sup_X \psi^+\leq C$,
$$
\log \tr_{\omega}(\omega_\f)(x_0)\leq \log A_1 +\psi^+(x_0)-\psi^-(x_0)\leq A_2-\psi^-(x_0)\, .
$$
It follows that
$$
H(x)\leq H(x_0)\leq A_2 +\p^-(x_0) -(4C+1)(\varphi-\phi)(x_0).
$$ 
By assumption and the $\Cc^0$ estimate in Theorem 
\ref{thm: uniform poly semi} we have 
$$
\f\geq \frac{1}{4C+1}\p^- + \phi -A_3,
$$
where  $A_3$ depends on $C$ and $\int_X e^{-2(4C+1)\f}\omega^n$. Thus
$$
\log \tr_{\omega}(\omega_\f)\leq A_4-2\psi^- +(4C+1)(\f-\phi).
$$
We finally get
$$
\tr_{\omega}(\omega_{\f})\leq A_5 e^{-2\psi^- -(4C+1)\phi}.
$$
\end{proof}
\begin{proof}[Proof of Theorem \ref{thm: main semi}]
We proceed as in Section \ref{sect: proof main general}. We also borrow the 
notations there. Let $\rho_{\varepsilon}(\psi^{\pm})$ be the Demailly's smoothing regularization of $\psi^{\pm}$. For each $\varepsilon>0$ let $\varphi_{\varepsilon}$ be the unique smooth 
function such that $\sup_X \f_{\varepsilon}=0$ and 
$$
(\theta+\vep \omega +dd^c \f_{\vep})^n = 
c_{\vep} e^{\rho_{\vep}(\psi^+)-\rho_{\vep}(\psi^-)}\omega^n,
$$
where $c_{\vep}$ is a normalization constant. As in Section \ref{sect: proof main general} we have a uniform control on the right-hand side:
$$
c_{\varepsilon} e^{\rho_{\varepsilon}(\psi^+)-\rho_{\varepsilon}
(\psi^-)} \leq e^{C-\p^-_{\vep}} .
$$
Now, we can copy the arguments in Section \ref{sect: proof main general} since our uniform estimate and laplacian estimate do not depend on $\vep$.
The proof is thus complete.
\end{proof}

\end{document}